\newtheorem{theorem}{Theorem}[section]
\newtheorem{proposition}[theorem]{Proposition}
\newtheorem{lemma}[theorem]{Lemma}
\newtheorem{corollary}[theorem]{Corollary}
\newtheorem{definition}[theorem]{Definition}
\theoremstyle{definition}
\newtheorem{example}[theorem]{Example}
\newtheorem{remark}[theorem]{Remark}
\newcommand{\Z}{\mathbb{Z}}
\newcommand{\disk}{D} 
\newcommand{\configD}{\mathbf D} 
\newcommand{\diagD}[2]{\ifthenelse{\equal{#2}{L}}{#1^{\textsc l}}{#1^{\textsc r}}} 
\newcommand{\pseudoquadrangle}{\raisebox{-1pt}{\includegraphics{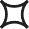}}} 
\newcommand{\ssm}{\smallsetminus} 
\DeclareMathOperator{\conv}{conv} 
\DeclareMathOperator{\val}{val}
\DeclareMathOperator{\Val}{Val}
\DeclareMathOperator{\ini}{in}
\DeclareMathOperator{\Web}{Web}
\DeclareMathOperator{\trop}{trop}
\DeclareMathOperator{\Trop}{Trop}
\DeclareMathOperator{\Gr}{Gr}
\DeclareMathOperator{\Dr}{Dr}
\newcommand{\ie}{\textit{i.e.}~} 
\newcommand{\eg}{\textit{e.g.}~} 
\definecolor{darkblue}{rgb}{0,0,0.7} 
\newcommand{\darkblue}{\color{darkblue}} 
\newcommand{\Dfn}[1]{\emph{\darkblue #1}} 
\newcommand\T{\rule{0pt}{2ex}}       
\newcommand\B{\rule[-2ex]{0pt}{0pt}} 
\title{Cluster Algebras of Type $D_4$, Tropical Planes, and the Positive Tropical Grassmannian}
\author[S.B.~Brodsky]{Sarah~B.~Brodsky$^{\otimes}$}
\address[S.B.~Brodsky]{Department of Mathematics, Technische Universität Berlin, 10623 Berlin, Deutschland}
\email{\href{mailto:brodsky@math.tu-berlin.de}{\texttt{brodsky@math.tu-berlin.de}}}
\thanks{$^{\otimes}$With the support of the European Research Council grant SHPEF awarded to
Olga Holtz}
\author[C.~Ceballos]{Cesar Ceballos$^{\oplus}$}
\address[C.~Ceballos]{Faculty of Mathematics, University of Vienna, 1090 Vienna, Austria}
\email{\href{mailto:cesar.ceballos@univie.ac.at}{\texttt{cesar.ceballos@univie.ac.at}}}
\urladdr{\url{http://garsia.math.yorku.ca/~ceballos/}}
\thanks{$^{\oplus}$With partial support of the government of Canada through a
Banting Postdoctoral Fellowship, of a York University research grant, and of
the Austrian Science Foundation FWF, grant F 5008-N15, in the framework of the
Special Research Program ``Algorithmic and Enumerative Combinatorics''.}
\author[J.-P.~Labb\'e]{Jean-Philippe Labb\'e$^{\odot}$}
\address[J.-P. Labb\'e]{Einstein Institute of Mathematics, Hebrew University of Jerusalem, 91904 Jerusalem, Israel}
\email{\href{mailto:labbe@math.fu-berlin.de}{\texttt{labbe@math.fu-berlin.de}}}
\urladdr{\url{http://www.math.huji.ac.il/~labbe}}
\thanks{$^{\odot}$With the support of a FQRNT post-doctoral fellowship and a post-doctoral ISF grant (805/11)}
\keywords{Tropical planes \and Grassmannian \and Pseudotriangulations \and Cluster complex \and Computational methods}
\subjclass[2010]{Primary 14T05; Secondary 14N10 \and 52C30}
\begin{document}

\maketitle

\begin{abstract}
We show that the number of combinatorial types of clusters of type $D_4$ modulo reflection-rotation is exactly equal to the number of combinatorial types of tropical planes in $\mathbb{TP}^5$. This follows from a result of Sturmfels and Speyer which classifies these tropical planes into seven combinatorial classes using a detailed study of the tropical Grassmannian $\Gr(3,6)$.
Speyer and Williams show that the positive part $\Gr^+(3,6)$ of this tropical Grassmannian is combinatorially equivalent to a small coarsening of the cluster fan of type~$D_4$. We provide a structural bijection between the rays of $\Gr^+(3,6)$ and the almost positive roots of type $D_4$ which makes this connection more precise.  
This bijection allows us to use the pseudotriangulations model of the cluster algebra of type $D_4$ to 
describe the equivalence of ``positive" tropical planes in $\mathbb{TP}^5$, giving a combinatorial model which characterizes the combinatorial types of tropical planes using automorphisms of pseudotriangulations of the octogon.
\end{abstract}

\section{Introduction}
Cluster algebras are commutative rings generated by a set of cluster variables, which are grouped into overlapping sets called clusters. They were introduced by S.~Fomin and A.~Zelevinsky in the series of papers~\cite{FominZelevinsky-ClusterAlgebrasI, FominZelevinsky-ClusterAlgebrasII, FominZelevinsky-ClusterAlgebrasIII, FominZelevinsky-ClusterAlgebrasIV}, and since then have shown fascinating connections with diverse areas such as Lie theory, representation theory, Poisson geometry, algebraic geometry, combinatorics and discrete geometry. One important family of examples is the family of cluster algebras of finite type, which were classified in~\cite{FominZelevinsky-ClusterAlgebrasII} using the Cartan--Killing classification for finite crystallographic root systems. Among them are the cluster algebras of type $D_n$ which are one of the main objects of study in this paper.
We focus on the cluster algebra of type $D_4$ and the combinatorial structures related to its cluster complex. Cluster complexes are simplicial complexes that encode the mutation graph of the cluster algebra, i.e., the graph describing how to pass from a cluster to another. They were introduced by Fomin and Zelevinsky~\cite{FominZelevinsky-YSystems} in connection with their proof of the Zamolodchikov's periodicity conjecture for algebraic $Y$-systems. One remarkable connection of cluster complexes with tropical geometry was discovered by Speyer and Williams in~\cite{speyer_tropical_2005}.
They study the positive part~$\Gr^+(d,n)$ of the tropical Grassmannian $\Gr(d,n)$, and show that $\Gr^+(2,n)$ is combinatorially the cluster complex of type $A_{n-3}$, and that $\Gr^+(3,6)$ and $\Gr^+(3,7)$ are closely related to the cluster complexes of type $D_4$ and $E_6$.
The tropical Grassmannian $\Gr(d,n)$ was first introduced by Speyer and Sturmfels~\cite{speyer_tropical_2004} as a parametrization space for tropicalizations of ordinary linear spaces. 
The tropicalization of an ordinary linear space gives a tropical linear space in $\mathbb {TP}^{n-1}$ in the sense of Speyer~\cite{speyer_tropical_2008,speyer_matroid_2009}, but not all tropical linear spaces are realized in this way in general. In the case $\Gr(3,6)$, all tropical planes in $\mathbb{TP}^5$ are realized by the Grassmannian~\cite{speyer_tropical_2004}. Speyer and Sturmfels~\cite{speyer_tropical_2004} explicitly studied all tropical planes in $\mathbb{TP}^5$ and found that there are exactly 7 different combinatorial types. 
On the other hand, one may also classify clusters of type $D_4$ up to combinatorial type, from which we deduce that there are exactly 7 different combinatorial types of clusters modulo reflection and rotation. 
This leads to two natural questions:
\begin{enumerate}
\item How are the 7 combinatorial types of tropical planes in $\mathbb{TP}^5$ and the 7 combinatorial types of type $D_4$ clusters related?
\item Which of the 7 combinatorial types of tropical planes in $\mathbb{TP}^5$ are realized in the positive part $\Gr^+(3,6)$ of the tropical Grassmannian $\Gr(3,6)$?
\end{enumerate}

This paper gives precise answers to these questions.
Surprisingly, the 7 combinatorial types of tropical planes and the 7 combinatorial types of clusters are not bijectively related as one might expect.  We show that only 6 of the 7 combinatorial types of tropical planes are achieved by the positive tropical Grassmannian $\Gr^+(3,6)$, and use the pseudotriangulation model of cluster algebras of type $D_4$ to compare them with the combinatorial types of clusters of type $D_4$. 
In particular, we obtain that 

\begin{quote}
\textit{if two pseudotriangulations are related by a sequence of reflections of the octagon preserving the parity of the vertices, and possibly followed by a global exchange of central chords, then their corresponding tropical planes in $\mathbb{TP}^5$ are combinatorially equivalent.
}
\end{quote}

The combinatorial classes of positive tropical planes are then obtained by taking unions of the classes generated by this finer equivalence on pseudotriangulations.

The outline of the paper is as follows.
Section~\ref{sec:clusteralgebras} recalls the notion of cluster algebras of type $D_n$ and their cluster complexes, and shows that the clusters of type $D_4$ are divided into 7 combinatorial classes.
Section~\ref{sec:tropical} recalls the necessary notions in tropical geometry, tropical Grassmannians and their positive parts, as well as the Speyer--Sturmfels classification of tropical planes in $\mathbb{TP} ^5$ into 7 combinatorial classes.
In Section~\ref{sec:connecting}, we present a precise connection between the cluster complex of type~$D_4$ and the positive part $\Gr^+(3,6)$ of the tropical Grassmannian $\Gr(3,6)$.
We compute the combinatorial types of tropical planes in $\mathbb{TP}^5$ corresponding to the clusters of type $D_4$ in Section~\ref{sec:computations}.
Finally, in Section~\ref{sec:comparing} we describe the combinatorial types of tropical planes using automorphisms of pseudotriangulations.

{\bf Acknowledgements.} We are grateful to York University for hosting visits
of the first and third authors. We also thank Hugh Thomas for helpful
discussions.

\section{Cluster Algebras of type $D_n$}
\label{sec:clusteralgebras}

In this section, we present the cluster algebras of type~$D_n$. Different models exist for these cluster algebras: in terms of centrally symmetric triangulations of a $2n$-gon with bicolored long diagonals~\cite[Section~3.5]{FominZelevinsky-YSystems}\cite[Section~12.4]{FominZelevinsky-ClusterAlgebrasII}, in terms of tagged triangulations of a punctured $n$-gon~\cite{FominShapiroThurston}, or in terms of pseudotriangulations of a $2n$-gon with a small disk in the center~\cite{CP_pseudotriangulations}. 
We adopt the last model mentioned to deal with clusters combinatorially.
The automorphisms of pseudotriangulations allow us to define the combinatorial type of a cluster.
In this paper, we classify and compare clusters up these combinatorial types.
We refer to the original paper~\cite{CP_pseudotriangulations} for a more detailed study of cluster algebras of type~$D_n$ in terms of pseudotriangulations, and briefly describe here their model. 
 
Consider a regular convex~$2n$-gon together with a disk~$\disk$ placed at the center, whose radius is small enough such that~$\disk$ only intersects the long diagonals of the $2n$-gon. We denote by~$\configD_n$ the resulting configuration. 
The vertices of~$\configD_n$ are labeled by $0,1,\dots, n-1, \overline 0, \overline 1,\dots ,\overline{n-1}$ in counterclockwise direction, such that two vertices $p$ and $\overline p$ are symmetric with respect to the center of the polygon. 
The \Dfn{chords} of~$\configD_n$ are all the diagonals of the $2n$-gon, except the long ones, and all the segments tangent to the disk~$\disk$ and with one endpoint among the vertices of the $2n$-gon; see Figure~\ref{fig:configurationDn&Flips}.

\begin{figure}[!tbhp]
\includegraphics[scale=0.8]{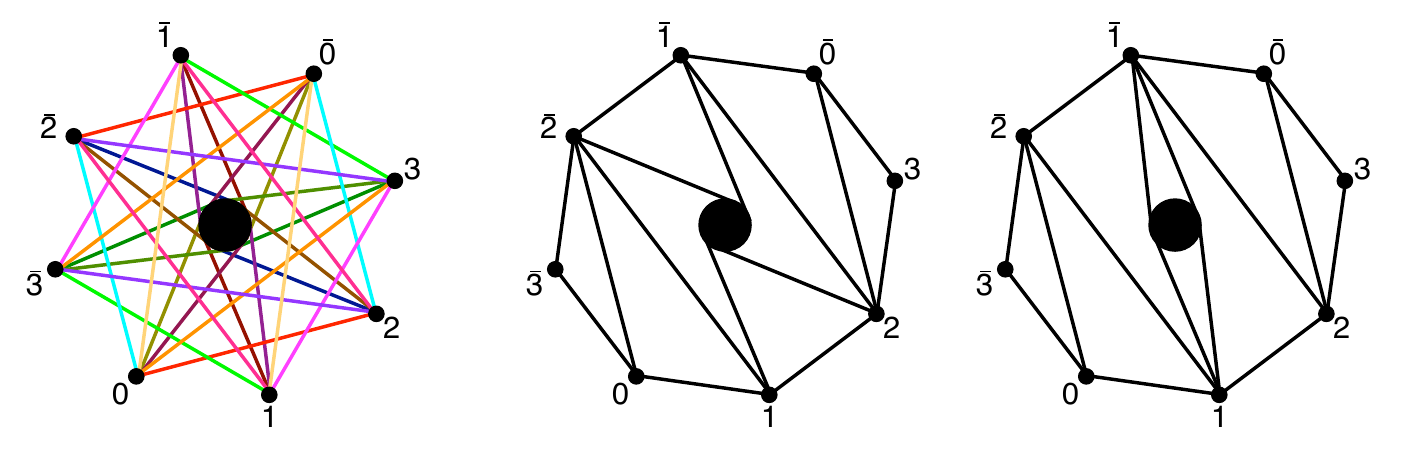}
\caption{The configuration~$\configD_4$ has $16$ centrally symmetric pairs of chords (left). A centrally symmetric pseudotriangulation~$T$ of~$\configD_4$ (middle). The centrally symmetric pseudotriangulation of~$\configD_4$ obtained from~$T$ by flipping the centrally symmetric pair of chords~$\{2^\textsc{l}, \overline 2^\textsc{l}\}$~(right).}
\label{fig:configurationDn&Flips}
\end{figure}

Each vertex~$p$ is incident to two such chords; we denote by~$\diagD{p}{L}$ (resp.~by~$\diagD{p}{R}$) the chord emanating from~$p$ and tangent on the left (resp.~right) to the disk~$\disk$. We call these chords \Dfn{central}.

Cluster variables, clusters, cluster mutations and exchange relations in the cluster algebra of type~$D_n$ can be interpreted geometrically as follows:
\begin{enumerate}
\item Cluster variables correspond to \Dfn{centrally symmetric pairs of (internal) chords} of the geometric configuration~$\configD_n$, as shown in Figure~\ref{fig:configurationDn&Flips}\,(left). To simplify notations, we identify a chord~$\delta$, its centrally symmetric copy~$\bar \delta$, and the pair~$\{\delta, \bar \delta\}$. 

\item Clusters correspond to \Dfn{centrally symmetric pseudotriangulations} of~$\configD_n$ (\ie maximal centrally symmetric crossing-free sets of chords of~$\configD_n$). Each pseudotriangulation of~$\configD_n$ contains exactly~$2n$ chords, and partitions $\conv(\configD_n) \ssm \disk$ into \Dfn{pseudotriangles}  (\ie interiors of simple closed curves with three convex corners related by three concave chains); see Figure~\ref{fig:configurationDn&Flips}. 

\item Cluster mutations correspond to \Dfn{flips} of centrally symmetric pairs of chords between centrally symmetric pseudotriangulations of~$\configD_n$. A flip in a pseudotriangulation~$T$ replaces an internal chord~$e$ by the unique other internal chord~$f$ such that~$(T \ssm e) \cup f$ is again a pseudotriangulation of~$T$. More precisely, deleting~$e$ in~$T$ merges the two pseudotriangles of~$T$ incident to~$e$ into a pseudoquadrangle~$\pseudoquadrangle$ (\ie the interior of a simple closed curve with four convex corners related by four concave chains), and adding~$f$ splits the pseudoquadrangle~$\pseudoquadrangle$ into two new pseudotriangles. The chords~$e$ and~$f$ are the two unique chords which lie both in the interior of~$\pseudoquadrangle$ and on a geodesic between two opposite corners of~$\pseudoquadrangle$. 

\item As in type~$A$, the exchange relations between cluster variables during a cluster mutation can be understood in the geometric picture. This is illustrated for all different kinds of flips in Figure~\ref{fig:typeDflip}.
\end{enumerate}

\begin{figure}[!htbp]
	\centerline{\includegraphics[width=0.85\textwidth]{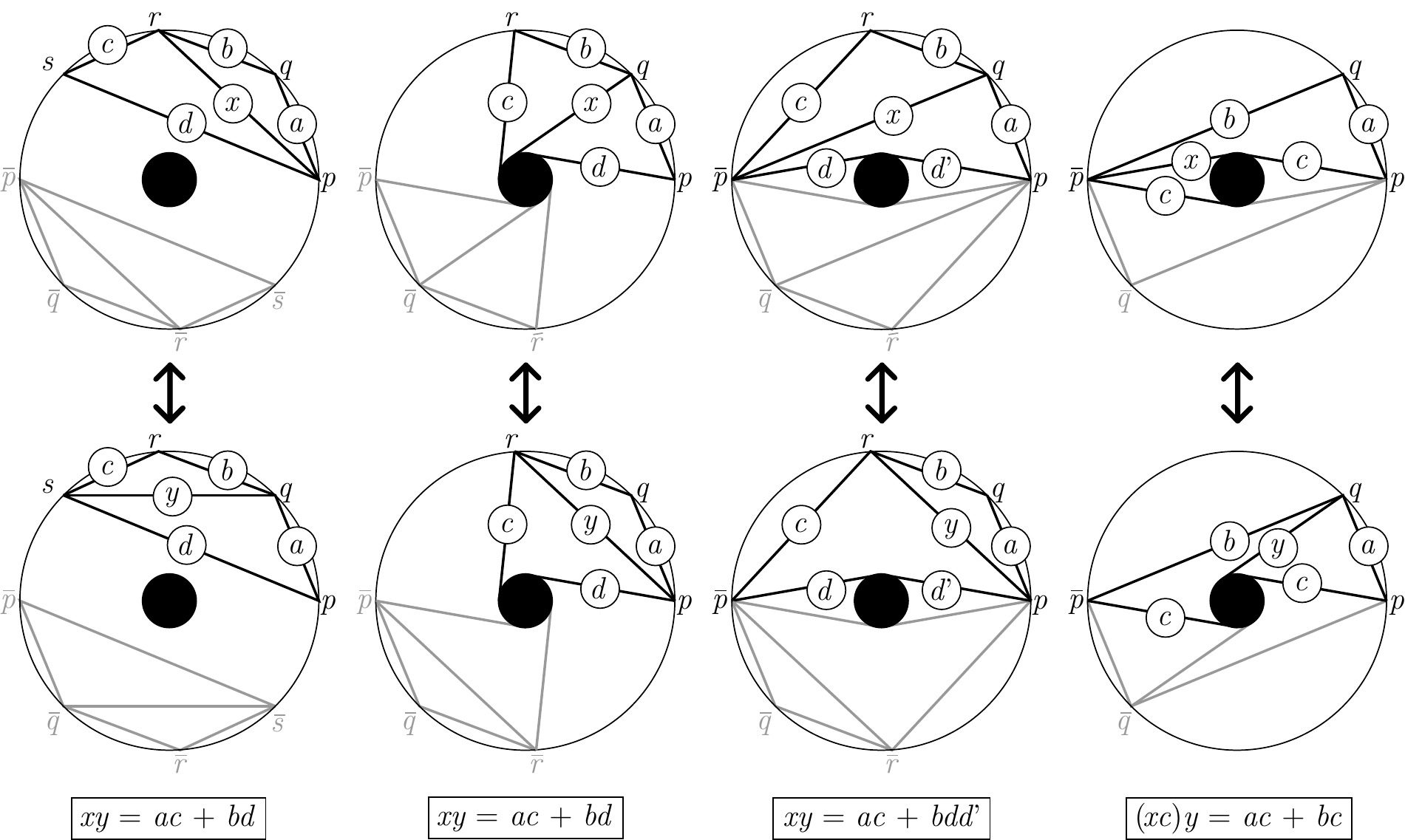}}
	\caption{Different kinds of flips and exchange relations in type~$D_n$}
	\label{fig:typeDflip}
\end{figure}

\subsection{The cluster complex of type $D_4$}
The cluster complex of type $D_n$ can also be described from the geometric model presented in~\cite{CP_pseudotriangulations}. We recall this description in the particular case of cluster algebras of type~$D_4$. 
Let $\{\tau_1,\tau_2,\tau_3,\tau_4\}$ be the set of simple generators of the Coxeter group of type $D_4$ according to the labeling of the Dynkin diagram in Figure~\ref{fig:DynkinD4}, and 
let $\Delta=\{\alpha_1,\alpha_2,\alpha_3,\alpha_4\}$ be the set of simple roots of the corresponding root system.
We denote by $\Phi=\Phi^+\sqcup \Phi^-$ the set of roots partitioned into positive and negative roots, and by $\Phi_{\geq -1}=-\Delta \sqcup \Phi^+$ the set of almost positive roots.   

\begin{figure}[h]
		\begin{overpic}[scale=1]{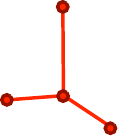}
			\put(55,35){\small $\tau_2$}
			\put(90,0){\small $\tau_1$}
			\put(55,90){\small $\tau_3$}
			\put(-25,20){\small $\tau_4$}
			
		\end{overpic}
\caption{The Dynkin diagram of type $D_4$}
\label{fig:DynkinD4}
\end{figure}

\begin{figure}[htb]
\begin{center}
\includegraphics[scale=0.9]{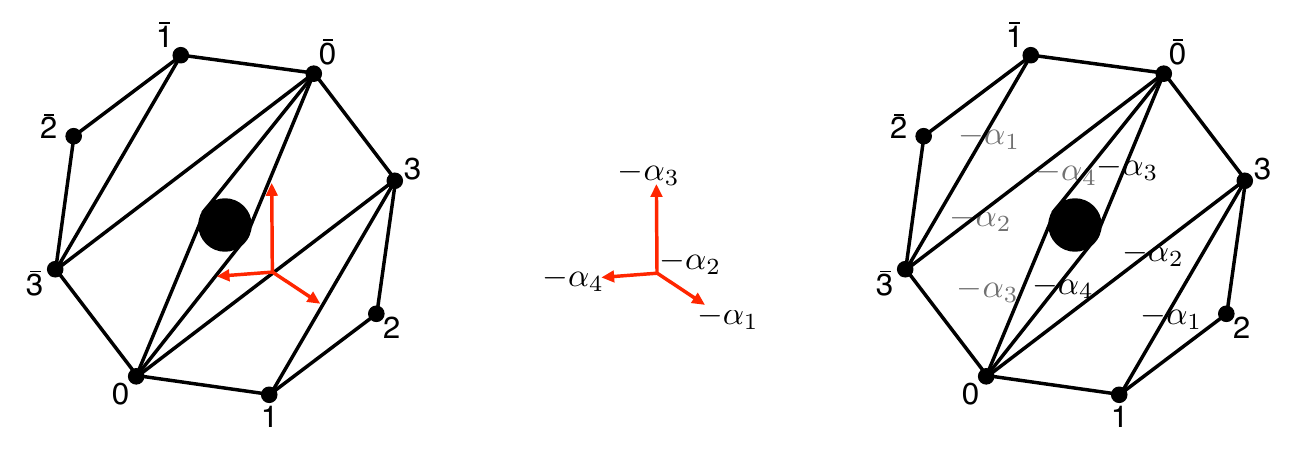}
\caption{The snake pseudotriangulation $T_0$ (left), its corresponding quiver of type~$D_4$~(middle), Snake chords labeled with the negative simple roots (right)}
\label{fig:snake}
\end{center}
\end{figure}

Let $T_0=\{0^\textsc{l},0^\textsc{r},03,13\}$ be the ``snake pseudotriangulation" illustrated in Figure~\ref{fig:snake}. We label its centrally symmetric pairs of chords with the negative simple roots $\{-\alpha_1,-\alpha_2,-\alpha_3,-\alpha_4\}$ as shown, and identify any other chord $\delta$ with the positive root obtained by adding the simple roots associated to the chords of $T_0$ crossed by $\delta$. This determines a bijection between the set of almost positive roots and the set of chords in the geometric configuration.  
The cluster complex is the simplicial complex whose maximal simplices are the sets of almost positive roots corresponding to pseudotriangulations. 

\begin{example}
For instance, the chord $\overline 12$ corresponds to the positive root $\alpha_1+2\alpha_2+\alpha_3+\alpha_4$ since it crosses two chords of $T_0$ with label~$\alpha_2$ and one chord for each label $\alpha_1,\alpha_3$ and $\alpha_4$. The cluster associated to the pseudotriangulation $T=\{1^\textsc{l}, 2^\textsc{l}, \overline 02, \overline 12\}$ is $\{\alpha_2+\alpha_4, \alpha_1+\alpha_2+\alpha_4,\alpha_1+\alpha_2, \alpha_1+2\alpha_2+\alpha_3+\alpha_4\}$.
To lighten the notation, sometimes we write~$\alpha_I$ for the positive root $\alpha_I=\sum_{i\in I}\alpha_i$ where $I$ is a multi-set with possible repeated subindices. With this notation, the cluster corresponding to $T$ is $\{\alpha_{24},\alpha_{124},\alpha_{12},\alpha_{12234}\}$. The correspondence between all chords and the positive roots is illustrated in Figure~\ref{fig:F_36_fig1} (right) on page~\pageref{fig:F_36_fig1}, and explicitly presented in Figure~\ref{fig:bijection} on page~\pageref{fig:bijection}.
\end{example}

\subsection{Combinatorial types of clusters of type $D_4$}\label{ssec:combi_type}
The cluster algebra of type $D_n$ induces a natural \Dfn{rotation} operation, denoted by $\tau$, on clusters, see \eg \cite[Section~2.2]{ceballos_denominator_2015}.
In the pseudotriangulation model, the action of $\tau$ corresponds to counterclockwise rotation by $\pi/n$, and exchanging $p^\textsc{l}$ with $p^\textsc{r}$, see~\cite{CP_pseudotriangulations}.
For instance, the chord $03$ in Figure~\ref{fig:snake} is rotated to $1\overline 0$, while $0^\textsc{r}$ is rotated to $1^\textsc{l}$.

\begin{definition}
We say that two pseudotriangulations $T_1$ and $T_2$ are \Dfn{combinatorially equivalent} modulo reflections and~$\tau$-rotations if there exists a sequence of reflections of the $2n$-gon and $\tau$-rotations transforming $T_1$ into $T_2$.
Further, we say that two clusters of type $D_n$ are \Dfn{combinatorially equivalent} if their corresponding pseudotriangulations are combinatorially equivalent modulo reflections and $\tau$-rotations.
\end{definition}

\begin{proposition}
There are exactly 7 combinatorial types of clusters of type $D_4$ modulo reflections and~$\tau$-rotations.
\end{proposition}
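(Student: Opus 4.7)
The plan is a finite enumeration. The cluster complex of type $D_4$ has exactly $\frac{3n-2}{n}\binom{2n-2}{n-1}=50$ maximal simplices when $n=4$, so the set of centrally symmetric pseudotriangulations of~$\configD_4$ is finite, and the group $G$ generated by the reflections of the octagon together with the $\tau$-rotation acts on this set. The proposition therefore reduces to counting the orbits of a finite group acting on a set of size~$50$.

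First I would prepare the data by listing all $50$ centrally symmetric pseudotriangulations of~$\configD_4$. One efficient way is to start from the snake pseudotriangulation~$T_0$ of Figure~\ref{fig:snake} and iteratively apply flips, using the correspondence between almost positive roots and chords to label vertices of the cluster complex and to detect when a new pseudotriangulation has been produced. Next I would introduce combinatorial invariants of a pseudotriangulation that are manifestly $G$-invariant: the number of centrally symmetric pairs of central chords (noting that $\tau$ swaps the labels $\textsc L$ and $\textsc R$, so one must use the \emph{unordered} pair of $\textsc L$- and $\textsc R$-counts), the multiset of ``lengths'' of non-central chords viewed as diagonals of the octagon, and the combinatorial shapes of the pseudotriangles incident to the central disk~$\disk$. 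These invariants split the $50$ pseudotriangulations into candidate classes.

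Finally I would verify that the candidate classes coincide with the $G$-orbits by exhibiting, within each class, an explicit sequence of reflections and $\tau$-rotations relating any two representatives, and by choosing a canonical representative for each orbit. The outcome is then a table of seven pseudotriangulations, one per orbit, completing the proof. The main obstacle is handling possible \emph{coincidences}, in which two pseudotriangulations share all the invariants above but are not in fact $G$-equivalent; such cases have to be ruled out by hand (or by a small computer check, since the acting group has order at most $32$ and the set has size $50$). Apart from that, the argument is a routine but careful case analysis.
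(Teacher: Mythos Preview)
Your proposal is correct and follows essentially the same approach as the paper: both reduce the proposition to a finite orbit count of the group generated by reflections and $\tau$-rotations acting on the $50$ pseudotriangulations, carried out by direct case analysis. The paper's proof is even terser---it simply observes that the group contains ordinary rotation and the global $\textsc L\leftrightarrow\textsc R$ swap (derived from combining $\tau$ with reflections), and then refers to Figure~\ref{fig:combinatorial_pseudotriang} for the outcome of the enumeration---whereas you spell out the invariant-based methodology more carefully, but the substance is the same.
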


\begin{proof}
The type $D_4$ rotation $\tau$ exchanges $p^\textsc{l}$ and $p^\textsc{r}$ after usual rotation by $\frac{\pi}{4}$. Applying two reflections we also obtain usual rotation without this special rule. Combining $D_4$ rotation and reflections we can then obtain an operation that preserves the non-central chords and exchanges $p^\textsc{l}$ and $p^\textsc{r}$. After an analysis of the 50 pseudotriangulations modulo these four operations ($D_4$ rotation, usual rotation, reflection, and exchange of central chords) we classify them into 7 combinatorial classes in Figure 5.
\end{proof}

\begin{figure}[t]
\begin{center}
\includegraphics[width=\textwidth]{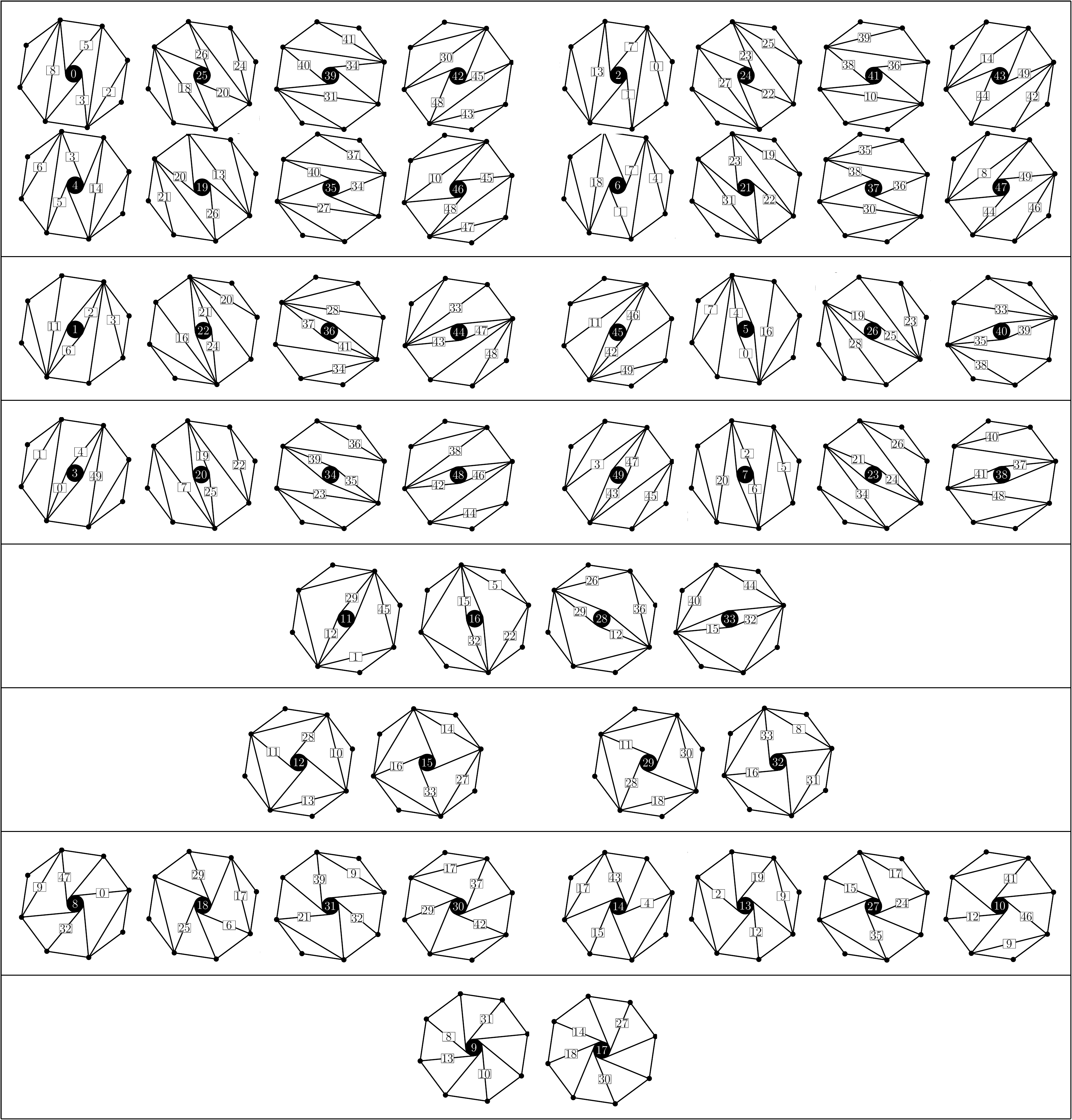}
\caption{The seven combinatorial types of pseudotriangulations (or clusters) of type~$D_4$ modulo reflections and $\sigma$-swap. The 50 pseudotriangulations are labeled with the numbers from 0 to 49 at the center of the disk. Each pair of chords in a pseudotriangulation is labeled with the pseudotriangulation obtained when flipping it.}
\label{fig:combinatorial_pseudotriang}
\vspace{-0.5cm}
\end{center}
\end{figure}

Using the fact that there are 7 combinatorial types of tropical planes in $\mathbb{TP}^5$~\cite{speyer_tropical_2004} we obtain:
\begin{corollary}
The number of combinatorial types of clusters of type $D_4$ is equal to the number of combinatorial types of tropical planes in $\mathbb{TP}^5$.
\end{corollary}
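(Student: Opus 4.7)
The corollary is a direct numerical comparison, so the proof plan is essentially bookkeeping rather than mathematics. The plan is to invoke the preceding proposition together with the cited Speyer--Sturmfels classification and observe that both counts equal~$7$.

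More explicitly, the first step is to recall that the proposition immediately above establishes, via the case analysis over the $50$ centrally symmetric pseudotriangulations of~$\configD_4$ modulo the four operations ($D_4$-rotation~$\tau$, ordinary rotation by~$\pi/4$, reflection, and exchange of central chords), that there are exactly $7$ combinatorial equivalence classes of clusters of type~$D_4$. The second step is to quote the classification of Speyer and Sturmfels~\cite{speyer_tropical_2004}, which asserts that the tropical planes in~$\mathbb{TP}^5$ fall into exactly $7$ combinatorial types. Comparing the two integers gives the equality claimed in the corollary.

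There is no real obstacle here: the corollary is stated as an observation rather than as a structural result, and in particular it does not assert a canonical bijection between the two sets of $7$ classes. The only subtlety worth flagging in the write-up is that the matching of cardinalities is a priori a coincidence: as the introduction of the paper emphasizes, the two collections of seven classes turn out \emph{not} to be in natural bijection, since only six of the seven tropical types are realized by $\Gr^+(3,6)$. I would therefore keep the proof to one or two sentences, citing the proposition for one side and~\cite{speyer_tropical_2004} for the other, and defer the structural comparison to the later sections of the paper.
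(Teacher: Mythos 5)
Your proposal matches the paper's reasoning exactly: the corollary is presented as an immediate consequence of the preceding proposition (7 combinatorial types of clusters) together with the Speyer--Sturmfels count of 7 combinatorial types of tropical planes in $\mathbb{TP}^5$. Your remark that the equality of cardinalities is a coincidence rather than a natural bijection is also consistent with the paper's framing.
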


We emphasize this as a corollary as it is our main motivation for establishing a relationship between tropical planes in $\mathbb{TP}^5$ and clusters of type~$D_4$. Before making a precise connection between clusters of type $D_4$ and tropical planes in $\mathbb{TP}^5$, we recall some preliminaries in tropical geometry.

\section{Tropical Varieties and their Positive Part}
\label{sec:tropical}

The min-plus, or \emph{tropical}, semiring $\mathbb{T}:=(\mathbb{R}\cup \{-\infty\},\oplus,\odot)$ is defined with the following operations:
\[
	x\oplus y=\min\{x,y\}\: \mbox{ and }\: x\odot y=x+y,
\]
where the zero element is $\infty$ and the one element is 0. A \Dfn{tropical monomial} is an expression of the form $c\odot x_{1}^{\odot a_1}\odot \cdots \odot x_{n}^{\odot a_n}$ and a \Dfn{tropical polynomial} is a finite tropical sum of tropical monomials.
Equivalently, a tropical polynomial is a piecewise linear concave function given as the minimum of finitely many linear functions of the form $$(x_1,...,x_n)\mapsto a_1x_1+...+a_nx_n+c.$$ A \Dfn{tropical hypersurface} $\mathcal{T}(f)$ is the set of all points $\bar{x}=(x_1,...,x_n)$ in $\mathbb{R}^n$ such that the tropical polynomial $f$ is not linear at $\bar{x}$; i.e. $\mathcal{T}(f)$ is the set of points $\bar{x}$ at which the minimum of the values attained by the linear functions defining $f$ is attained by at least two of the linear functions defining $f$.\\

The tropical objects defined above are inherently tropical, in the sense that they are defined over $\mathbb{T}$. It is also possible to consider ``classical'' algebraic objects over a fixed field $\mathbb{K}$ and construct their tropical analogues via a nonarchimedean valuation.\

Let us consider the field $\mathbb{K}:=\mathbb{C}\{\{t\}\}=\{\sum \alpha_r t^{r/n} : \alpha_r\in \mathbb{C}, r\in \mathbb{Z}, n\in\mathbb{Z}_+\}$ of \Dfn{Puiseux series} over $\mathbb{C}$. The field $\mathbb{K}$ is of characteristic 0 and is algebraically closed. Our field $\mathbb{K}$ has a natural nonarchimedean valuation $\val:\mathbb{K}\rightarrow \mathbb{T}$ sending 0 to $-\infty$ and, for $a(t)\neq 0$, sending $a(t)\mapsto \min\{\frac{r}{n} : \alpha_r\neq 0\}$. Note that this valuation extends naturally to a valuation $\Val: \mathbb{K}^n\rightarrow \mathbb{T}^n$ on $\mathbb{K}^n$ via coordinatewise evaluation: $(a_1(t),...,a_n(t))\mapsto (\val(a_1(t)),\dots,\val(a_n(t))$. Let us consider the polynomial $$f=\sum_{i=1}^{k} a_i(t)x_{1}^{b_{1}^i}\cdots x_{n}^{b_{n}^i}\in \mathbb{K}[t_1,...,t_n].$$ We then define the \Dfn{tropicalization} of $f$, obtained by replacing $+$ with $\oplus$, $\cdot$ with $\odot$, and all coefficients with their valuation. There is also a notion of \Dfn{tropical projective space}, $$\mathbb{TP}^n:=\mathbb{T}^{n+1}\backslash \{(\infty,...,\infty)\},$$ which is defined by the equivalence relation $v\sim v+\lambda (1,...,1)$. In other words, $\mathbb{TP}^n$ is equal to $(\mathbb{T}^{n+1}\backslash \{(\infty,...,\infty)\})/(1,...,1)$. We then have that the coordinate system $(x_1,...,x_{n+1})$ on $\mathbb{T}^{n+1}$ induces a tropical homogeneous coordinate system $[x_1:...:x_{n+1}]$ on $\mathbb{TP}^n$ given by the natural embedding, $\mathbb{R}^n\rightarrow \mathbb{TP}^{n+1},\mbox{  } (x_1,...,x_n)\mapsto [x_1:...:x_n:0]$.\\

Picking a $w\in \mathbb{R}^n$, the \Dfn{$w$-weight} of a term $a_i(t)x_{1}^{b_{1}^i}\cdots x_{n}^{b_{n}^i}$ of the polynomial $f$ is equal to the dot product $\langle \val(a_i(t))+ w, \bar{b}\rangle$, where $\bar{b}=(b_{1}^i,...,b_{n}^i)$. We then define $\ini_{w}(f)$ to be the polynomial consisting of the sum of the terms of $f$ with the largest $w$-weight. A \Dfn{tropical hypersurface} can then be defined as $$\mathcal{T}(f) :=\{w\in \mathbb{R}^n:\ \ini_{w}(f)\ \mbox{is not a monomial}\}.$$ It is straightforward to check that the ``inherently tropical'' definition of a tropical hypersurface given at the beginning of this section is equivalent to the one directly above. A \Dfn{root} of a tropical polynomial $f$ in~$n$ variables $x_1,...,x_n$ is a point $(b_1,...,b_n)\in \mathbb{R}^n$ such that the value $f(b_1,...,b_n)$, is attained by least two of the linear functions defining $f$.\

A finite intersection of tropical hypersurfaces is called a \Dfn{tropical prevariety}. Given an ideal $I\in \mathbb{K}[t_1,...,t_n]$ and $w\in \mathbb{R}^n$, we define the \Dfn{tropical variety} $\mathcal{T}(I)$ to be the intersection of tropical hypersurfaces $$\mathcal{T}(I) :=\bigcap_{f\in I} \mathcal{T}(f)$$ along with a weight function $\Omega_w: \mathbb{K}[x_1,...x_n]\rightarrow \mathbb{R}$, sending $$a_i(t)x_{1}^{b_{1}^i}\cdots x_{n}^{b_{n}^i}\mapsto \langle \val(a_i(t))+ w, \bar{b}\rangle$$ for each monomial of a polynomial $f=\sum_{i=1}^{k} a_i(t)x_{1}^{b_{1}^i}\cdots x_{n}^{b_{n}^i}\in I$, where $\bar{b}=(b_{1}^i,...,b_{n}^i)$. Equivalently, the tropical variety $\mathcal{T}(I)$ can be defined as $$\mathcal{T}(I) :=\{w\in \mathbb{R}^n: \ini_{w}(I)\ \mbox{has no monomials}\},$$ where $\ini_{w}(I)$ is the ideal generated by the set $\{\ini_{w}(f): f\in I\}$. It is important for us to note that if we have a variety $V(I)$, we can consider the closure of the image of $V(I)$ under the map $$\trop : \mathbb{K}^n\rightarrow \mathbb{R}^n, x\mapsto \val(x)$$ and that we get $\mathcal{T}(I)=\overline{\trop(V(I))}$.

\subsection{The positive part of a tropical variety}

Now that we have some idea as to what a tropical variety is, we can define the positive part of a tropical variety. The \Dfn{positive part} of a tropical variety $\mathcal{T}(I)$ is defined to be
\[
	\mathcal{T}^+(I):=\overline{trop(V(I)\cap (\mathbb{K}^+)^n)},
\]
where $\mathbb{K}^+:=\{ \alpha(t)\in \mathbb{K} : \mbox{the coefficient of the lowest term of } \alpha(t) \mbox{ is real and positive}\}$. The following theorems are useful to uncover the positive part of a tropical variety:

\begin{theorem}[Speyer and Williams \cite{speyer_tropical_2005}]
A point $w=(w_1,...,w_n)$ lies in $\mathcal{T}^+(I)$ if and only if $\ini_w(I)$ does not contain any nonzero polynomials in $\mathbb{K}^+[t_1,...,t_n]$.
\end{theorem}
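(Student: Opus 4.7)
The plan is to establish both implications as the positive-real refinement of the Fundamental Theorem of Tropical Geometry (which characterizes membership in $\mathcal{T}(I)$ by the absence of monomials in $\ini_w(I)$). The forward direction uses preservation of positivity under Puiseux-series substitution; the reverse direction uses a real-algebraic lifting of a positive seed point in the vanishing locus of the initial ideal.

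For the forward direction, by taking limits it suffices to treat $w = \Val(x)$ for some $x \in V(I) \cap (\mathbb{K}^+)^n$. Writing $x_i = a_i t^{w_i}(1 + o(1))$ with each $a_i \in \mathbb{R}_{>0}$, a direct substitution shows that for any $f \in I$ the initial coefficient of the Puiseux series $f(x)$ is, up to a positive rational power of $t$, the value $(\ini_w f)(a_1,\ldots,a_n)$; since $f(x) = 0$ this coefficient vanishes. Extending by $\mathbb{K}$-linearity, every $h \in \ini_w(I)$ satisfies $h(a_1,\ldots,a_n) = 0$. If moreover $h \in \mathbb{K}^+[x_1,\ldots,x_n]$, then every leading coefficient of $h$ is a strictly positive real number, so evaluating at the positive reals $a_1,\ldots,a_n$ produces a strictly positive real, forcing $h = 0$.

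For the reverse direction, suppose $\ini_w(I)$ contains no nonzero polynomial in $\mathbb{K}^+[x_1,\ldots,x_n]$. The first subgoal is to exhibit a positive real point $a = (a_1,\ldots,a_n) \in \mathbb{R}_{>0}^n$ in the vanishing locus of $\ini_w(I)$: the hypothesis is precisely the obstruction-freeness needed by a Positivstellensatz-style argument, and one extracts such a point by applying a real-algebraic certificate theorem to the pair (initial ideal, positive orthant). The second subgoal is a Newton--Puiseux-type iteration that promotes $a$ to an element $x \in V(I) \cap (\mathbb{K}^+)^n$ with $\Val(x) = w$: at each step the next-order correction is controlled by a further initial ideal to which the same positivity hypothesis transfers, so a positive increment can be chosen; the limit of the iteration furnishes the required positive lift, establishing $w \in \mathcal{T}^+(I)$.

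The main obstacle is the passage from ``no positive polynomial in $\ini_w(I)$'' to ``a strictly positive real zero of $\ini_w(I)$''. This is a sharper statement than Hilbert's Nullstellensatz, requiring a Positivstellensatz-flavored argument tailored to the positive orthant rather than to arbitrary semialgebraic sets, and the translation between ``positive combinations of generators'' and ``positive real points of the variety'' is where the delicate work lies. The subsequent Newton--Puiseux lifting is standard in spirit, but must be carried out ideal-theoretically when $\ini_w(I)$ is not a complete intersection, so that the successive approximations remain simultaneously compatible with every generator of $I$.
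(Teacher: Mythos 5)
The paper does not prove this statement: it is quoted verbatim from Speyer and Williams \cite{speyer_tropical_2005} (their Proposition~2.2) and used as a black box, so there is no in-paper argument to compare against. Assessing your blind proposal on its own terms: the forward direction is in the right spirit but glosses over two points. The reduction ``by taking limits it suffices to treat $w=\Val(x)$'' needs a justification that the property ``$\ini_w(I)$ contains no nonzero positive polynomial'' is closed in $w$ (it is, because it is constant on Gr\"obner cells and is inherited on their closures, but that has to be said). And the evaluation step works cleanly only if $\ini_w(f)$ is taken with residue-field coefficients: with the paper's convention that $\ini_w(f)$ retains the full $\mathbb{K}$-coefficients $a_I(t)$, the equality $f(x)=0$ only forces the leading $\mathbb{C}$-coefficient of $\ini_w(f)(a)$ to vanish, not the whole element of $\mathbb{K}$; also ``produces a strictly positive real'' should read ``produces a nonzero element of $\mathbb{K}^+$.'' These are repairable, but they are not addressed.

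The reverse direction has a genuine gap. The passage from ``$\ini_w(I)$ contains no nonzero element of $\mathbb{K}^+[x_1,\ldots,x_n]$'' to ``$V(\ini_w(I))$ meets the strictly positive orthant'' is not an off-the-shelf Positivstellensatz: the relevant cone is the cone of polynomials with all coefficients in $\mathbb{K}^+$ (a Polya/Handelman-type cone, over the non-archimedean field $\mathbb{K}$, not over $\mathbb{R}$), and the needed implication is essentially the companion theorem of Speyer and Williams that the paper quotes next --- a theorem that itself requires a nontrivial proof and, as stated, involves iterated initial ideals rather than a single application of a certificate theorem. Invoking ``a real-algebraic certificate theorem'' thus assumes the hard part. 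Likewise, the Newton--Puiseux lifting rests on the unproved assertion that ``the same positivity hypothesis transfers'' to the further initial ideals governing each correction; that transfer is exactly where the positivity bookkeeping must be carried out, and it is simply asserted. As written, the reverse direction is a plan for a proof, not a proof.
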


\begin{theorem}[Speyer and Williams \cite{speyer_tropical_2005}]
An ideal $I$ of $\mathbb{K}[t_1,...,t_n]$ contains a nonzero element of $\mathbb{K}^+[t_1,...,t_n]$ if and only if $(\mathbb{K}^+)^n\cap V(\ini_w(I))=\emptyset$, for all $w\in \mathbb{K}^n$.
\end{theorem}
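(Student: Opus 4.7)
My plan is to prove the two implications separately, with the forward direction reduced to a coefficient-monitoring argument and the reverse direction requiring a real-algebraic lifting that I split into a Positivstellensatz-type existence step and a standard initial-form bookkeeping step.

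For the forward direction, I will fix a nonzero $f \in I \cap \mathbb{K}^+[t_1,\ldots,t_n]$ and $w \in \mathbb{R}^n$, and argue that $\ini_w(f)$ itself lies in $\mathbb{K}^+[t_1,\ldots,t_n]$. By definition $\ini_w(f)$ is the sum of those monomials of $f$ whose $w$-weight is extremal, so its coefficients form a subset of the coefficients of $f$ and hence all lie in $\mathbb{K}^+$. Closure of $\mathbb{K}^+$ under sums and products of positive elements then forces $\ini_w(f)(x) \in \mathbb{K}^+$, and in particular nonzero, for every $x \in (\mathbb{K}^+)^n$. Therefore no point of $(\mathbb{K}^+)^n$ lies in $V(\ini_w(f))$, and a fortiori none lies in $V(\ini_w(I))$.

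For the reverse direction I argue the contrapositive: assuming $I$ contains no nonzero positive polynomial, I must exhibit some $w \in \mathbb{R}^n$ and some $x \in (\mathbb{K}^+)^n \cap V(\ini_w(I))$. The first step, which I expect to be the substantive one, is a positive Nullstellensatz claim: the absence of a positive polynomial in $I$ forces $V(I) \cap (\mathbb{K}^+)^n$ to be nonempty. The heuristic is that an empty positive variety ought to be certified by a Positivstellensatz-style identity, and the only certificates available for the open orthant $(\mathbb{K}^+)^n$ eventually produce a nonzero element of $I$ whose coefficients all lie in $\mathbb{K}^+$, contradicting the hypothesis. Once a positive zero $x = (x_1, \ldots, x_n) \in V(I)$ is in hand, I set $w := (\val(x_1), \ldots, \val(x_n)) \in \mathbb{R}^n$ and perform the usual fundamental-theorem-of-tropical-geometry bookkeeping: writing each $x_i = c_i\, t^{\val(x_i)}(1 + o(1))$ with $c_i \in \mathbb{R}_{>0}$, the cancellation of the lowest-order-in-$t$ terms in the identity $f(x) = 0$ translates directly into the vanishing of $\ini_w(f)$ at the leading-coefficient tuple $(c_1,\ldots,c_n)$, which itself belongs to $\mathbb{R}_{>0}^n \subset (\mathbb{K}^+)^n$.

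The main obstacle is the positive Nullstellensatz input: turning ``$I$ has no positive polynomial'' into an explicit positive zero of $I$ is a real-algebraic statement not implied by the complex-algebraic fundamental theorem of tropical geometry, and requires either invoking a Puiseux-series adaptation of Stengle's Positivstellensatz or a direct Zorn's-lemma extension of the positive cone $\mathbb{K}^+$ to a maximal ordering compatible with $\mathbb{K}[t_1,\ldots,t_n]/I$, reading off the positive common zero from the associated real place. Once this existence is secured, the valuation-theoretic lifting and the coefficient monitoring of the forward direction are routine, and the theorem falls out by assembling the two pieces.
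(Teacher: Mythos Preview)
The paper itself does not prove this theorem; it is stated with a citation to Speyer and Williams and used as a black box, so there is no proof in the paper's text to compare your proposal against.

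That said, your outline is the natural one. Your forward direction is correct as written. For the reverse direction you have correctly isolated the substantive step: deducing from ``$I$ contains no nonzero positive polynomial'' that $V(I)$ meets $(\mathbb{K}^+)^n$ is a Positivstellensatz-type statement over the real closed field $\mathbb{R}\{\{t\}\}$, and that is indeed where the real content lies. One caution on your final bookkeeping: with the paper's convention that $\ini_w(f)$ retains its coefficients in $\mathbb{K}$ rather than passing to the residue field $\mathbb{C}$, the real tuple $(c_1,\ldots,c_n)$ of leading coefficients does not literally satisfy $\ini_w(f)(c_1,\ldots,c_n)=0$, because the summands $a_b\,c^b$ for $b$ in the initial support carry different valuations $\val(a_b)=m-\langle w,b\rangle$ and hence do not cancel in $\mathbb{K}$. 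The standard repair is to work with the residue-field initial ideal, where your lowest-order cancellation argument produces the desired positive zero directly; alternatively one must lift that residue-field zero back to a $\mathbb{K}^+$-point of $V(\ini_w(I))$, which is an extra (though routine) step.
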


In practice, the positive part of a tropical variety can be uncovered by identifying each domain of linearity with the sign of its defining monomial and taking the components of the tropical variety which separate regions of different signs to be the positive part. Let us see what this means through an example:

\begin{example}
Let us consider the polynomial $1-x-y+txy=0$. Its tropicalization $$f=\min\{0,x,y,x+y-1\}$$ can be represented as a partition of the plane into polygonal regions, as illustrated in Figure~\ref{fig:tropvar}. We want to find solutions in $\mathbb{K}^+$ to $1-x-y+txy=0$ and take the closure of the tropicalization of this to get the positive part. Say $(x,y)=(a_0t^{b_0}+\cdots, c_0t^{d_0}+\cdots )$ is such a solution, where $a_0$ and $c_0$ are the lowest terms. We then have that $$1-(a_0t^{b_0}+\cdots )-(c_0t^{d_0}+\cdots ) +t^{-1}(a_0t^{b_0}+\cdots )(c_0t^{d_0}+\cdots )=0,$$ and we can see that the only terms which have even the possibility of cancelling each other out are terms with different signs (as $a_0$ and $c_0$ are real and positive). So the positive part of our tropical variety is composed of the components of our tropical variety separating linear regions defined by monomials of different signs, as seen in Figure~\ref{fig:tropvar}.

We can then see that finding the positive part of a tropical variety by identifying linear regions with the sign of their defining linear term works as a method for finding the positive part of a tropical variety in general, meaning that \emph{the positive part of a tropical variety recaptures the sign of each monomial, which is initially lost through the tropicalization process}.

For this example specifically, we can identify the region in which $f$ is equal to $x+y-1$ with a ``$+$'' sign, since the sign of $txy$ is ``$+$''. Similarly, we identify the region in which $f$ is equal to $x$ and the region in which $f$ is equal to $y$ with ``$-$'' signs and the region in which $f$ is $0$ with a ``$+$''. The positive part is the subset of our tropical variety below defined by the components in bold as opposed to dashed.

\begin{figure}[!tbp]
\begin{center}
\begin{tabular}{c@{\hspace{2cm}}c}
\begin{tikzpicture}[fleche/.style={<-,line width=2pt,>=stealth, cap=round,blue},
                    ligne/.style={line width=2pt,cap=round,blue}]
\def\dim{1}
\draw (-2*\dim,0) -- coordinate (x axis mid) (\dim,0);
\draw (0,-2*\dim) -- coordinate (y axis mid) (0,\dim);
\foreach \x in {-2,-1,1}
    \draw (\x*\dim,1pt) -- (\x*\dim,-3pt)
    node[anchor=north] {\x};
\draw (0,1pt) -- (0,-3pt) node[anchor=north west] {0};
\foreach \y in {-2,-1,1}
    \draw (1pt,\y*\dim) -- (-3pt,\y*\dim)
    node[anchor=east] {\y};
\draw (1pt,0) -- (-3pt,0) node[anchor=south east] {0};

\draw[fleche] (1.5*\dim,0) -- (0,0);
\draw[fleche] (0,1.5*\dim) -- (0,0);
\draw[ligne] (0,0) -- (-\dim,-\dim);
\draw[fleche] (-\dim,-2.5*\dim) -- (-\dim,-\dim);
\draw[fleche] (-2.5*\dim,-\dim) -- (-\dim,-\dim);

\node at (\dim,\dim) {$\mathbf{f=0}$};
\node at (-1.25*\dim,1*\dim) {$\mathbf{f=x}$};
\node at (1*\dim,-1*\dim) {$\mathbf{f=y}$};
\node at (-2.25*\dim,-1.5*\dim) {$\mathbf{f=x+y-1}$};
\end{tikzpicture}
&
\begin{tikzpicture}[fleche/.style={<-,line width=2pt,>=stealth, cap=round,blue},
                    ligne/.style={line width=2pt,cap=round,blue},
                    lignedashed/.style={dashed,line width=1pt,cap=round}]

\def\dim{1}
\draw (-2*\dim,0) -- coordinate (x axis mid) (\dim,0);
\draw (0,-2*\dim) -- coordinate (y axis mid) (0,\dim);
\foreach \x in {-2,-1,1}
    \draw (\x*\dim,1pt) -- (\x*\dim,-3pt)
    node[anchor=north] {\x};
\draw (0,1pt) -- (0,-3pt) node[anchor=north west] {0};
\foreach \y in {-2,-1,1}
    \draw (1pt,\y*\dim) -- (-3pt,\y*\dim)
    node[anchor=east] {\y};
\draw (1pt,0) -- (-3pt,0) node[anchor=south east] {0};

\draw[fleche] (1.5*\dim,0) -- (0,0);
\draw[fleche] (0,1.5*\dim) -- (0,0);
\draw[lignedashed] (0,0) -- (-\dim,-\dim);
\draw[fleche] (-\dim,-2.5*\dim) -- (-\dim,-\dim);
\draw[fleche] (-2.5*\dim,-\dim) -- (-\dim,-\dim);

\node at (0.5*\dim,0.5*\dim) {\Large$\mathbf{+}$};
\node at (-1*\dim,1*\dim) {\Large$\mathbf{-}$};
\node at (1*\dim,-1*\dim) {\Large$\mathbf{-}$};
\node at (-1.5*\dim,-1.5*\dim) {\Large$\mathbf{+}$};
\end{tikzpicture}
\end{tabular}
\end{center}
\caption{The tropical variety defined by $f=\min\{0,x,y,x+y-1\}$ on the left, and its positive part on the right}
\label{fig:tropvar}
\end{figure}
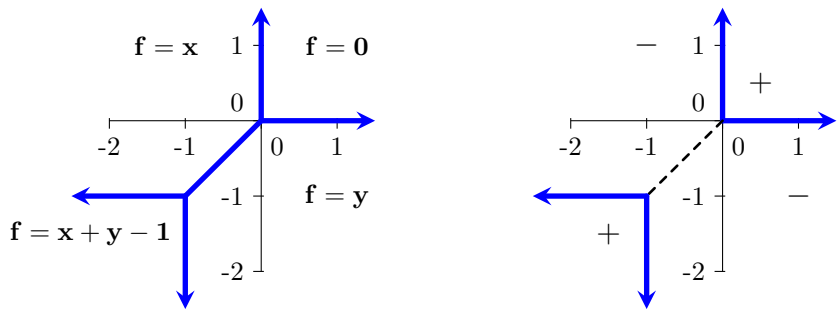
\end{example}

\subsection{The Dressian}

As discussed in \cite{speyer_tropical_2008}, a \Dfn{tropical linear space} is a tropical variety $\mathcal{T}(I)$ given by an ideal of the form $$I=\langle a_{i1}x_1+a_{i2}x_2+\cdots +a_{in}x_n : i=1,2,...,n-d\rangle,$$ where the $a_{ij}$ are the entries of any $(n-d)\times n$-matrix of rank $n-d$ with entries in $\mathbb{K}[t_1,...,t_n]$. 
The ideal $I$ can be rewritten in terms of a vector in $\mathbb{R}^{{n \choose d}}$ whose entries are the \Dfn{Pl\"{u}cker coordinates}
	$$p_{i_1i_2\cdots i_d} := (-1)^{i_1+i_2+\cdots +i_d}\cdot \mbox{det} 
	\left(\begin{array}{cccc}
		a_{1,j_1} & a_{1,j_2} & \cdots & a_{1,j_{n-d}} \\
		a_{2,j_1} & a_{2,j_2} & \cdots & a_{2,j_{n-d}} \\
		\vdots & \vdots & \ddots & \vdots \\
		a_{n-d,j_1} & a_{n-d,j_2} & \cdots & a_{n-d,j_{n-d}}
	\end{array}\right),$$
where $i_1<\cdots <i_d, j_1<\cdots < i_{n-d}$ and $\{i_1,...,i_d,j_1,...,j_{n-d}\}=\{1,...,n\}$. Explicitly, we have
\[
	I=\langle \sum_{r=0}^d (-1)^r\cdot p_{i_0 i_1\cdots \hat{i_r}\cdots i_d}\cdot x_{i_r} : \text{ for all } 1\leq i_0\leq i_1<\cdots <i_r\leq n\rangle.
\]
Speyer gives the details on how to derive $I$ in terms of Pl\"{u}cker coordinates in \cite{speyer_tropical_2008}. The \Dfn{Dressian} $\Dr(d,n)$ is the tropical prevariety which parametrizes ($d-1$)-dimensional tropical linear spaces, and is more explicitly the tropical prevariety consisting of the intersection of the tropical hypersurfaces given by all 3-term Pl\"{u}cker relations. The Dressian can also be defined as the polyhedral fan of those regular subdivisions of the \textit{(d,n)-hypersimplex} which have the property that each cell is a \textit{matroid polytope} \cite{herrmann_dressians_2012}. More specifically, let $e_1,...,e_n$ be the standard basis vectors for $\mathbb{R}^n$ and let $e_X :=\sum_{i\in X} e_i$ for any subset $X\subseteq [n]$. Given a matroid $X\subseteq {[n] \choose d}$, we define its \Dfn{matroid polytope} to be the polytope $P_X :=\conv \{e_x : x\in X\}.$ The \Dfn{$(d,n)$-hypersimplex} $\triangle (d,n)$ in $\mathbb{R}^n$ is defined as $\triangle (d,n) := P_{{[n] \choose d}}.$ A \Dfn{matroid subdivision} of a polytope $P$ is a polytopal subdivision of $P$ such that each of its cells is a matroid polytope. A \Dfn{weight vector} $\lambda$ on an polytope $P$ assigns a real number to each vertex of~$P$. A given weight vector $\lambda$ induces a polytopal subdivision of a polytope $P$ by considering the set $\conv \{(v,\lambda (v)) : v \mbox{ vertex of } P \}$ in $\mathbb{R}^{n+1}$ and projecting the lower (or upper) envelope to the hyperplane $(\mathbb{R}^n,0)$; a polytopal subdivision of this kind is called a \Dfn{regular} polytopal subdivision.
The Dressian is shown in \cite{herrmann_dressians_2012} to be a subfan of the secondary fan of $\triangle (d,n)$:

\begin{proposition}\label{proposition 3.1}
\textup{(Herrmann, Jensen, Joswig, and Sturmfels \cite[Proposition 3.1]{herrmann_how_2009})} A \textit{weight vector} $\lambda\in \mathbb{R}^{{[n] \choose d}}$ lies in the Dressian $\Dr(d,n)$, seen as a fan, if and only if it induces a \textit{matroid subdivision} of the hypersimplex $\triangle (d,n)$.
\end{proposition}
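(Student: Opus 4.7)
The plan is to prove the equivalence by combining a classical edge criterion for matroid polytopes with an octahedral analysis of the regular subdivision of $\triangle(d,n)$ induced by $\lambda$.

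First I would invoke the Gelfand--Goresky--MacPherson--Serganova theorem: a polytope with $0/1$-vertices indexed by elements of $\binom{[n]}{d}$ is a matroid polytope if and only if every edge is parallel to some root $e_i - e_j$. Consequently, the subdivision induced by $\lambda$ is a matroid subdivision if and only if it contains no \emph{long} edge, namely a segment $[e_B, e_{B'}]$ with $|B \triangle B'| \geq 4$.

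Next I would analyze the local obstruction on octahedra. Fix $S \in \binom{[n]}{d-2}$ and four distinct indices $i, j, k, l \notin S$; the six vertices $e_{Sxy}$ for $xy \in \{ij,kl,ik,jl,il,jk\}$ are the vertices of an octahedron whose three diagonals share the common midpoint $e_S + \tfrac{1}{2}(e_i+e_j+e_k+e_l)$. Lifting by $\lambda$, the three diagonals have heights
\[
\tfrac{1}{2}(\lambda_{Sij}+\lambda_{Skl}),\quad \tfrac{1}{2}(\lambda_{Sik}+\lambda_{Sjl}),\quad \tfrac{1}{2}(\lambda_{Sil}+\lambda_{Sjk})
\]
at this midpoint. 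A diagonal appears as an edge of the induced subdivision precisely when its lifted height is the strict minimum among the three; otherwise the octahedron is either preserved whole or split along an equatorial square, both of which have only short edges. Hence no diagonal of this octahedron is introduced by $\lambda$ exactly when the minimum of the three sums above is attained at least twice---which is exactly the tropical $3$-term Pl\"ucker relation indexed by $(S;i,j,k,l)$.

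Combining these two ingredients settles the equivalence for length-$4$ long edges; to rule out length-$2m$ long edges with $m \geq 3$, I would choose $S \subseteq B \cap B'$ of size $d-2$ together with pairs $\{i,j\} \subseteq B \ssm B'$ and $\{k,l\} \subseteq B' \ssm B$, intersect the offending cell with a suitable affine plane containing the long edge and the octahedron arising from this choice, and reduce inductively to the $4$-case. The main obstacle is executing this reduction cleanly: a long edge in a cell does not automatically project onto a diagonal of a single embedded octahedron, so one must carefully track the lifted geometry in order to transfer any non-matroidal edge to a failure of a tropical $3$-term Pl\"ucker relation on some concrete length-$4$ octahedron.
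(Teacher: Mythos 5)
The paper states this result as a citation to Herrmann, Jensen, Joswig, and Sturmfels and gives no proof of its own, so there is nothing internal to compare your argument against; I can only evaluate the argument on its merits. The GGMS edge criterion and the octahedral analysis are the right tools, and together they correctly handle the direction ``matroid subdivision $\Rightarrow$ in the Dressian'' as well as the converse for any offending edge $[e_B,e_{B'}]$ with $|B\triangle B'|=4$: your lifted-heights computation at the common midpoint, and the observation that a diagonal appears precisely when its height is the strict minimum, is exactly how the $4$-case is settled.

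However, the proposed reduction for $|B\triangle B'|=2m\geq 6$ cannot be executed as written. One has $|B\cap B'|=d-m\leq d-3$, so there is no $S\subseteq B\cap B'$ with $|S|=d-2$. More geometrically, an octahedral face of $\triangle(d,n)$ is cut out by fixing $d-2$ coordinates to $1$ and $n-d-2$ coordinates to $0$, and its three antipodal vertex pairs always have symmetric difference of size exactly $4$; once $|B\triangle B'|>4$, no octahedral face of $\triangle(d,n)$ contains both $e_B$ and $e_{B'}$, so the long edge is never the diagonal of an embedded octahedron, and intersecting the cell with an affine plane does not change this. The hard implication (Dressian $\Rightarrow$ matroid subdivision) is precisely the content of the proposition, and the proofs in the literature do not reduce to a single octahedron: one first shows that the tropical $3$-term Pl\"ucker relations imply the full valuated-matroid exchange axiom (a Dress--Wenzel type local-to-global result), and then uses that exchange axiom to verify that the lower faces of the lift project to matroid polytopes. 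That step is the crux of the theorem, and it is what your sketch leaves open.
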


It is pointed out in \cite{herrmann_how_2009} that the three term Pl\"{u}cker relations define a natural Pl\"{u}cker fan structure on the Dressian: two weight vectors $\lambda$ and $\lambda'$ are in the same cone if they specify the same initial form for each 3-term Pl\"{u}cker relation.\\

\begin{remark}
Tropical Pl\"{u}cker vectors (i.e. vectors of tropicalizations of Pl\"{u}cker coordinates) can be viewed as valuated matroids \cite[Remark 2.4]{herrmann_how_2009}. A \Dfn{valuated matroid} of rank $d$ on a set $[n]$ is a map $\pi: [n]^d\rightarrow \mathbb{R}\cup \{\infty\}$ such that 
\begin{enumerate}
\item $\pi(\omega)$ is independent of the ordering of the sequence vector $\omega$,
\item $\pi(\omega)=\infty$ if an element occurs twice in $\omega$,
\item for every ($d-1$)-subset $\sigma$ and every ($d+1$)-subset $\tau = \{\tau_1,\tau_2,...,\tau_{d+1}\}$ of $[n]$ the minimum of $$\pi(\sigma\cup \{\tau_i\}) + \pi(\tau\backslash \{\tau_i\}) \mbox{ for } 1\leq i\leq d+1$$ is attained at least twice.
\end{enumerate}
\end{remark}

\textit{Tropical planes} are dual to regular matroid subdivisions of the hypersimplex $\triangle (3,n)$, thus giving us another way to view $\Dr(3,n)$: The parameter space of tropical planes. A \Dfn{tropical plane} $L_p$ in $\mathbb{TP}^{n-1}$, for some $p\in \Dr(3,n)$, is the intersection of the tropical hyperplanes $$\bigcap_{\{i,j,k,l \}\in {[n] \choose 4}} \mathcal{T}(p_{ijk}x_l+p_{ijl}x_k+p_{ikl}x_j+p_{jkl}x_i),$$ where the Pl\"{u}cker coefficients appearing in $p_{ijk}x_l+p_{ijl}x_k+p_{ikl}x_j+p_{jkl}x_i$ are entries of $p$ lexicographically indexed by the order $i<j<k<l$.\

Just as planes in projective space $\mathbb{P}^{n-1}$ correspond to arrangements of $n$ lines in $\mathbb{P}^2$, tropical planes in $\mathbb{TP}^{n-1}$ correspond to arrangements of $n$ \textit{tropical lines} in $\mathbb{TP}^2$. A \Dfn{tropical line} in $\mathbb{TP}^{n-1}$ is an embedded metric tree which is balanced and has $n$ unbounded edges pointing in the coordinate directions. Thus, we can use arrangements of trees to represent matroid subdivisions of $\triangle (3,n)$. We say trees, and not metric trees due to the following result:

\begin{proposition}
\textup{(Herrmann, Jensen, Joswig, and Sturmfels \cite[Proposition 4.1]{herrmann_how_2009})} Each metric tree arrangement gives rise to an abstract tree arrangement by ignoring the edge lengths.
\end{proposition}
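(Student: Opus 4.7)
The plan is to verify that the operation of forgetting edge lengths sends a metric tree arrangement to a structure satisfying the axioms of an abstract tree arrangement. First, I would recall the definitions in detail: a metric tree arrangement consists of $n$ metric trees $T_1,\dots,T_n$, where $T_i$ has $n-1$ labeled leaves indexed by $[n]\setminus\{i\}$, together with a pairwise compatibility condition expressing that for every pair $i\neq j$, the restriction of $T_i$ obtained by trimming leaf $j$ agrees, as a metric tree on leaf set $[n]\setminus\{i,j\}$, with the corresponding trimming of $T_j$ at leaf $i$. An abstract tree arrangement is defined analogously, but with combinatorial (leaf-labeled) trees in place of metric trees, the compatibility condition asking that the corresponding abstract trimmings coincide.

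The first step is to produce the underlying abstract data. Given a metric tree arrangement $\{T_i\}$, define $\widetilde{T}_i$ to be the labeled tree obtained from $T_i$ by retaining the tree topology and leaf labels and discarding the edge lengths. This gives a well-defined labeled tree for each $i$, so the construction produces a candidate abstract tree arrangement $\{\widetilde{T}_i\}_{i\in[n]}$. The remaining task is to verify that these abstract trees satisfy the required pairwise compatibility.

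The central technical point is a lemma asserting that the trimming operation commutes with the forget-lengths map, i.e.\ for every pair $i\neq j$,
\[
\widetilde{\bigl(T_i|_{[n]\setminus\{i,j\}}\bigr)} \;=\; \widetilde{T}_i\bigl|_{[n]\setminus\{i,j\}},
\]
where on the right we use the purely combinatorial trimming of the abstract tree. Once this is established, the metric compatibility $T_i|_{[n]\setminus\{i,j\}} = T_j|_{[n]\setminus\{i,j\}}$ implies, by applying the forget-lengths map to both sides, the abstract compatibility $\widetilde{T}_i|_{[n]\setminus\{i,j\}} = \widetilde{T}_j|_{[n]\setminus\{i,j\}}$, which is exactly the defining axiom of an abstract tree arrangement.

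The main obstacle is controlling the behavior of trimming in the presence of degenerate edges. When a leaf is removed from a metric tree, the resulting internal vertex of degree two is contracted, and if a neighboring edge has length zero one must ensure that the combinatorial structure of the trimmed abstract tree agrees with the combinatorial abstraction of the trimmed metric tree. The key observation is that the metric compatibility forces such zero-length edges to appear consistently in $T_i$ and $T_j$ after the respective trimmings, so any contractions forced by the forget-lengths operation occur symmetrically on both sides. With this consistency of contractions, the commuting-square lemma holds, and the proposition follows.
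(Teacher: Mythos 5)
The paper cites this proposition from Herrmann, Jensen, Joswig, and Sturmfels and does not reproduce a proof, so there is no internal argument in the paper to compare against. On its own merits, your outline identifies the right mechanism: trimming a leaf and suppressing the degree-$2$ vertex it creates is a purely combinatorial operation on the underlying graph, so it commutes with the map that forgets edge lengths, and the metric compatibility then passes directly to the abstract setting.

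Two cautions. First, you have quietly recast the metric compatibility as \emph{the trimmed metric trees are isometric}. That is a convenient and correct equivalent form, but it is not literally how HJJS phrase the condition, which is in terms of consistency of pairwise leaf distances across the trees of the arrangement; passing from the distance version to your phrasing uses the classical but nontrivial fact that a tree metric determines the tree topology. If you were held to the HJJS definitions you would need to either invoke that fact explicitly or reproduce the distance bookkeeping. Second, your zero-length-edge worry only bites if the ``forget lengths'' map is understood to contract interior edges of length zero; in HJJS metric $n$-trees carry strictly positive interior weights, so the forgetful map really is the naive one, the commuting square is immediate, and the case analysis you sketch is unnecessary. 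With those two points addressed (or with the definitions chosen as you stated them), the argument is complete.
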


We then have that:

\begin{lemma}
\textup{(Herrmann, Jensen, Joswig, and Sturmfels \cite[Lemma 4.2]{herrmann_how_2009})} Each matroid subdivision $\Sigma$ of $\triangle (3,n)$ defines an abstract arrangement $T(\Sigma)$ of $n$ trees. Moreover, if $\Sigma$ is regular then $T(\Sigma)$ supports a metric tree arrangement.
\end{lemma}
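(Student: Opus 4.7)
The plan is to construct each tree $T_i$ from the local matroid data provided by $\Sigma$, verify that these trees are combinatorially consistent (yielding the abstract arrangement), and finally use the weight function associated to a regular subdivision to equip them with compatible edge lengths.

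First, I would define $T_i$ as follows. Fix $i \in [n]$. For every maximal cell $P_M$ of $\Sigma$ in which $i$ is not a loop of the rank-$3$ matroid $M$, the contraction $M/i$ is a rank-$2$ matroid on $[n]\ssm\{i\}$. A rank-$2$ matroid is determined by its partition into parallel classes, and so corresponds naturally to a star tree whose leaves are labelled by $[n]\ssm\{i\}$ and whose pendant edges at the unique internal vertex are grouped according to the parallel classes. The tree $T_i$ is then obtained by gluing these stars together along the codimension-$1$ adjacencies of $\Sigma$: when two maximal matroid polytopes share a facet, the corresponding contractions differ by a refinement of their parallel-class partitions, which translates locally into the operation of resolving a single internal vertex into an edge.

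Second, I would verify that the resulting $T_i$ is an abstract tree and that the collection $(T_1,\dots,T_n)$ is a compatible tree arrangement. Acyclicity of $T_i$ follows because the dual graph of $\Sigma$ restricted to cells containing $i$ is connected, and each adjacency strictly refines the local star structure, so no cycle can appear. Compatibility across two indices $i$ and $j$ — namely, that the splits of $T_i$ and $T_j$ agree upon forgetting $\{i,j\}$ — follows by applying the matroid axioms to the cells common to both restrictions: both trees ultimately record the behaviour of the rank-$1$ matroid obtained after contracting $\{i,j\}$ in the relevant cell.

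Third, if $\Sigma$ is regular, let $\lambda \in \mathbb{R}^{\binom{[n]}{3}}$ be a weight vector inducing it; by Proposition~\ref{proposition 3.1}, $\lambda$ is precisely a tropical Pl\"ucker vector. I would assign to an internal edge of $T_i$ that splits $[n]\ssm\{i\}$ as $A \sqcup B$ a length defined by an appropriate tropical combination of the entries of $\lambda$ indexed by triples $\{i,a,b\}$ with $a \in A,\ b \in B$. The three-term Pl\"ucker relations guarantee that this length is well defined independently of the choice of representative triple, which turns each $T_i$ into a metric tree and upgrades the whole arrangement to a metric tree arrangement.

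The main obstacle is the gluing carried out in the second step: one must ensure that the local star modifications induced by adjacent maximal cells globally fit together into a tree rather than into a more complicated graph, and that the cross-compatibility between $T_i$ and $T_j$ holds at every scale. This is essentially a matroid-theoretic statement about regular matroid subdivisions of $\triangle(3,n)$, and the verification of these compatibilities, together with checking that the candidate edge lengths in the metric step agree across all cells, is where the proof concentrates its technical work.
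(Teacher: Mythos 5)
The paper does not prove this lemma: it is stated as a quoted result from Herrmann, Jensen, Joswig, and Sturmfels~\cite[Lemma~4.2]{herrmann_how_2009}, so there is no in-paper argument to compare against. Your construction does reflect the strategy of that reference: the tree $T_i$ arises from restricting $\Sigma$ to the hypersimplex face $\{x_i=1\}\cong\triangle(2,n-1)$, whose cells are the matroid polytopes of the contractions $M/i$, and then invoking the correspondence between matroid subdivisions of a rank-$2$ hypersimplex and labelled trees.

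That said, the step you yourself identify as the ``main obstacle'' contains a real gap as phrased. Your acyclicity argument --- that the dual graph of the cells containing $i$ is connected and that ``each adjacency strictly refines the local star structure, so no cycle can appear'' --- does not establish tree-ness: connectedness of a dual graph says nothing about absence of cycles, and adjacent maximal rank-$2$ matroid polytopes are not related by refinement of their parallel-class partitions (they share a facet, which is a matroid polytope properly contained in both, not a coarsening of either). The fact you are missing is the splits--equivalence (Buneman) theorem: matroid subdivisions of $\triangle(2,m)$ are exactly split subdivisions, and a pairwise-compatible system of splits of $[m]$ is realized by a unique tree; this is what yields $T_i$ and also what makes the cross-compatibility of $T_i$ and $T_j$ (which you reduce, correctly in spirit, to the common contraction by $\{i,j\}$) go through cleanly. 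For the metric part, rather than unspecified ``tropical combinations'' whose consistency is asserted but not checked, the precise statement is that the restriction of the tropical Pl\"{u}cker vector $\lambda$ to triples containing $i$ is itself a rank-$2$ tropical Pl\"{u}cker vector on $[n]\ssm\{i\}$, hence a tree metric, and the three-term relations then enforce the agreement of these $n$ metrics on overlaps.
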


For a definition of \textit{abstract tree arrangement}, we refer the reader to \cite[Section 4]{herrmann_how_2009}. The bijection between tropical planes and arrangements of metric trees is studied in great detail in \cite{herrmann_how_2009}, with their main theorem being:

\begin{theorem}
\textup{(Herrmann, Jensen, Joswig, and Sturmfels \cite[Theorem 4.4]{herrmann_how_2009})} The equivalence classes of arrangements of $n$ metric trees are in bijection with regular matroid subdivision of the hypersimplex $\triangle (3,n)$. Moreover, the secondary fan structure on $\Dr(3,n)$ coincides with the Pl\"{u}cker fan structure.
\end{theorem}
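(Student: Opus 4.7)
The plan is to address the two assertions separately. For the bijection in the first part, Lemma 4.2 above already constructs the map in one direction: a regular matroid subdivision $\Sigma$ of $\triangle(3,n)$ produces an abstract tree arrangement $T(\Sigma)$, and regularity of $\Sigma$ equips the trees with edge lengths, so the map descends to equivalence classes of metric tree arrangements. To invert it, I would recover from any metric tree arrangement $\{T_1,\dots,T_n\}$ a tropical Pl\"ucker vector $p \in \mathbb{R}^{\binom{n}{3}}$: each tree $T_i$ is labeled by $[n]\setminus\{i\}$, and the weighted four point condition on $T_i$ translates into the tropical three term Pl\"ucker relations involving the index $i$, so the family of tree metrics assembles into a valuated matroid in the sense of the remark above. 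By Proposition \ref{proposition 3.1}, the associated weight vector lies in $\Dr(3,n)$ and induces a regular matroid subdivision of $\triangle(3,n)$, which one then checks recovers the original $\Sigma$ by reading off the dual tree of each interior face.

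For the coincidence of the two fan structures on $\Dr(3,n)$, I would show that the secondary cones and the Pl\"ucker cones are cut out by the same walls. A weight vector $\lambda$ lies in a given secondary cone iff the regular subdivision of $\triangle(3,n)$ it induces is a prescribed matroid subdivision $\Sigma$, while it lies in a given Pl\"ucker cone iff each three term tropical Pl\"ucker relation attains its minimum on the same prescribed subset of terms. One inclusion is formal: coinciding initial forms on all three term Pl\"ucker relations force the same local behaviour across every octahedral face of $\triangle(3,n)$, hence the same subdivision. Conversely, every flip wall of a matroid subdivision of $\triangle(3,n)$ corresponds to an equality of exactly two monomials in a single three term Pl\"ucker relation supported on an octahedron, so coinciding subdivisions force coinciding Pl\"ucker initial forms.

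The main obstacle I anticipate is the surjectivity of the tree-to-subdivision correspondence, equivalently the verification that the Pl\"ucker vector reconstructed from a metric tree arrangement really does satisfy the valuated matroid exchange axiom globally. Each tree $T_i$ individually encodes a coherent family of three term conditions through the four point condition on its leaves, but compatibility of these conditions across the $n$ different trees is an extra constraint which must be supplied by the definition of \emph{arrangement} rather than flowing from the individual trees. Once this compatibility is in place (as in Section 4 of Herrmann, Jensen, Joswig, and Sturmfels), the argument becomes largely bookkeeping: the four point condition on $T_i$ is literally the tropical three term Pl\"ucker relation whose indices all contain $i$, and this identification simultaneously establishes the bijection and the equality of fan structures.
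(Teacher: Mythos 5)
This statement is quoted from Herrmann, Jensen, Joswig, and Sturmfels \cite[Theorem 4.4]{herrmann_how_2009}; the present paper cites it as a known result and offers no proof of its own, so there is nothing internal to compare your proposal against. Evaluated on its own merits, your sketch captures the correct architecture: the forward map from a regular matroid subdivision to a metric tree arrangement is indeed supplied by the cited Lemma 4.2, the inverse reconstructs a tropical Pl\"ucker vector by reading each tree metric on leaves $[n]\setminus\{i\}$ off as the coordinates $p_{ijk}$ and checking the tropical three-term Pl\"ucker relations with one index fixed as the four-point condition, and you rightly flag the compatibility of the $n$ individual tree metrics as the real constraint coming from the definition of a tree \emph{arrangement}.

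Where I would push back is the closing assertion that the rest is ``largely bookkeeping.'' The coincidence of the Pl\"ucker fan and the secondary fan is the substantial content of this theorem. Your ``formal'' direction --- that coinciding initial forms on all three-term relations determine the subdivision --- in fact conceals the crucial local-to-global step: one must show that a matroid subdivision of $\triangle(3,n)$ is determined by its restrictions to the octahedral (three-dimensional hypersimplex) faces, and that all the local pieces glue consistently. This is exactly what distinguishes $d=3$; for $d>3$ the Pl\"ucker fan is strictly coarser than the secondary fan on the Dressian, so the statement would be false if the argument really were bookkeeping. You have the right ingredients, but the difficulty is concentrated where you declared it routine, not in the compatibility of the trees, which the definition of arrangement is designed to encode.
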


\subsection{The tropical Grassmannian and its positive part}
The \Dfn{tropical Grassmannian} $\Gr(d,n)$ is a tropical variety which is a subset of the Dressian $\Dr(d,n)$. As fans, the Grassmannian and the Dressian have the same $n$-dimensional lineality space and thus can be viewed as pointed fans in $\mathbb{R}^{{n \choose d}-n}$, one sitting inside of the other. Explicitly, the tropical Grassmannian is $\mathcal{T}(I_{d,n})$, where $I_{d,n}$ \Dfn{Pl\"{u}cker ideal}; i.e. the ideal generated by the Pl\"{u}cker relations.\

The tropical Grassmannian was first studied in Speyer and Sturmfels
\cite{speyer_tropical_2004} and its positive part was then studied in Speyer
and Williams \cite{speyer_tropical_2005}. Speyer and Williams lay out the first
steps in studying the positive part of a tropical variety and explicitly
outline a way of parametrizing the positive part $\Gr^+(d,n)$ of $\Gr(d,n)$
using a particular kind of directed graph $\Web_{d,n}$, which is a special case
of the $\reflectbox{L}$-diagrams of Postnikov \cite{postnikov_total_2006}. The short story of their parametrization is that they develop a bijection $\Phi_2$ from $(\mathbb{R}^+)^{(d-1)(n-d-1)}$ to the real, positive points of the Grassmannian (modulo its lineality space) using $\Web_{d,n}$. 
They tropicalize this map to get a surjection from $\mathbb{R}^{(d-1)(n-d-1)}$ to the positive part of the tropical Grassmannian (modulo its lineality space). 
Then they define a complete fan $F_{d,n}$ in $\mathbb{R}^{(d-1)(n-d-1)}$ whose maximal cones are the domains of linearity of the tropicalization of $\Phi_2$.\

For $\Gr^+(2,n)$, Speyer and Williams~\cite{speyer_tropical_2005} 
show that their fan $F_{2,n}$ is equal to the Stanley--Pitman fan $F_{n-3}$ of \cite{stanley_polytope_2002}, which is combinatorially isomorphic to the cluster complex of type $A_{n-3}$. 
They also show that the tropical Grassmannians $\Gr^+(3,6)$ and $\Gr^+(3,7)$ are closely related to the cluster complexes of type $D_4$ and $E_6$ respectively. The connection between $\Gr^+(3,6)$ and the cluster complex of type~$D_4$ will be made precise below.

\section{Connecting the cluster complex of type $D_4$ to $\Gr^+(3,6)$}
\label{sec:connecting}

In~\cite[Proposition~6.1]{speyer_tropical_2005}, Speyer and Williams provide an explicit computation of the fan $F_{3,6}$ associated to the tropical Grassmannian~$\Gr^+(3,6)$, together with inequalities defining a polytope that~$F_{3,6}$ is normal to. They computed the $f$-vector $(16,66,98,48)$ of $F_{3,6}$ and noticed that it is very close to the $f$-vector $(16,66,100,50)$ of the cluster complex of type~$D_4$. They found that $F_{3,6}$ has two cones which are of the form of a cone over a bipyramid, and stated that when subdividing these two bipyramids into two tetrahedra each, one gets a fan that is combinatorially isomorphic to the cluster complex of type~$D_4$.   

\begin{figure}[!htbp]
		\begin{overpic}[width=0.75\textwidth]{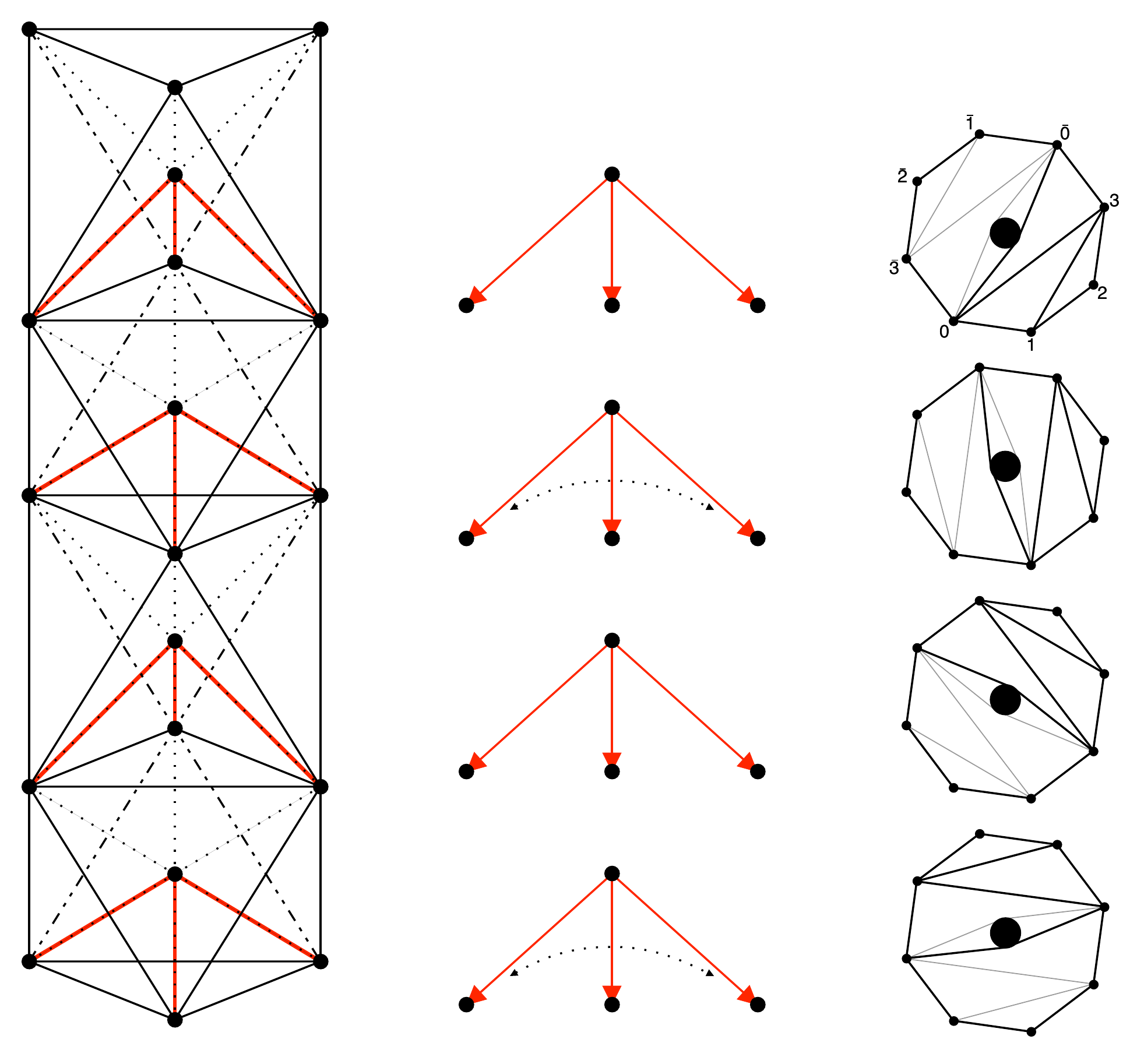}
			\put(-0.5,8){\small $r_9$}
			\put(29.5,8){\small $r_3$}
			\put(17,3){\small $r_{12}$}
			\put(17,16.5){\small $r_{10}$}

			\put(-0.5,49){\small $r_{14}$}
			\put(29.5,49){\small $r_6$}
			\put(17,44){\small $r_2$}
			\put(17,57.5){\small $r_7$}

			\put(-0.5,24){\small $r_8$}
			\put(29.5,24){\small $r_4$}
			\put(17,30){\small $r_{15}$}
			\put(17,37){\small $r_{16}$}

			\put(-0.5,65){\small $r_5$}
			\put(29.5,65){\small $r_{11}$}
			\put(17,71){\small $r_1$}
			\put(17,78){\small $r_{13}$}
			
			\put(40,3){\small $\alpha_3$}
			\put(52.5,3){\small $\alpha_1$}
			\put(65.5,3){\small $\alpha_4$}
			\put(52.5,18){\small $\alpha_{1234}$}
			
			\put(40,23.5){\small $\alpha_{124}$}
			\put(52.5,23.5){\small $\alpha_{234}$}
			\put(65.5,23.5){\small $\alpha_{123}$}
			\put(52.5,38.5){\small $\alpha_{12234}$}

			\put(40,44){\small $\alpha_{23}$}
			\put(52.5,44){\small $\alpha_{12}$}
			\put(65.5,44){\small $\alpha_{24}$}
			\put(52.5,59){\small $\alpha_2$}

			\put(40,64.5){\small $-\alpha_3$}
			\put(52.5,64.5){\small $-\alpha_1$}
			\put(65.5,64.5){\small $-\alpha_4$}
			\put(52.5,79.5){\small $-\alpha_2$}

			\put(84,8.5){\scriptsize $\alpha_3$}
			\put(89,17){\scriptsize $\alpha_1$}
			\put(93,11){\scriptsize $\alpha_4$}
			\put(83,14){\scriptsize $\alpha_{1234}$}
			
			\put(82,33.5){\scriptsize $\alpha_{124}$}
			\put(91.5,35){\scriptsize $\alpha_{234}$}
			\put(90,28.5){\scriptsize $\alpha_{123}$}
			\put(83.3,36){\scriptsize $\alpha_{12234}$}

			\put(83.5,56){\scriptsize $\alpha_{23}$}
			\put(92,51){\scriptsize $\alpha_{12}$}
			\put(86,47){\scriptsize $\alpha_{24}$}
			\put(90,56){\scriptsize $\alpha_2$}

			\put(91,76){\scriptsize $-\alpha_3$}
			\put(91.5,68){\scriptsize $-\alpha_1$}
			\put(83,69){\scriptsize $-\alpha_4$}
			\put(89,70){\scriptsize $-\alpha_2$}

		\end{overpic}
	\caption{Speyer--Williams fan $F_{3,6}$ (part 1) and the bijection between its rays, almost positive roots, and centrally symmetric pairs of chords in the geometric model}
	\label{fig:F_36_fig1}
\end{figure}

\begin{figure}[!thbp]
		\begin{overpic}[width=0.6\textwidth]{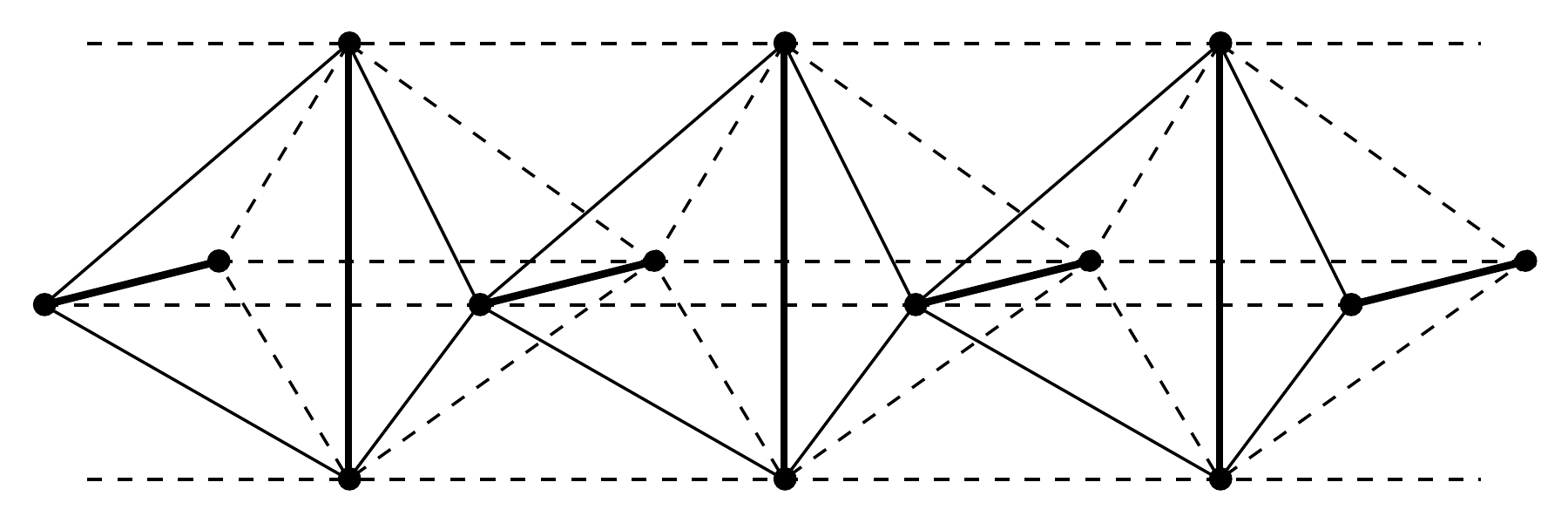}
			\put(21,32){\small $r_5$}
			\put(21,0){\small $r_8$}
			\put(48.8,32){\small $r_1$}
			\put(48.8,0){\small $r_{15}$}
			\put(76.7,32){\small $r_{11}$}
			\put(76.7,0){\small $r_4$}

			\put(16,17.5){\small $r_{2}$}
			\put(1.3,10.8){\small $r_{12}$}
			\put(43.8,17.5){\small $r_{14}$}
			\put(29.1,10.8){\small $r_{9}$}
			\put(71.6,17.5){\small $r_{6}$}
			\put(56.9,10.8){\small $r_{3}$}
			\put(99,17.5){\small $r_{2}$}
			\put(84.7,10.8){\small $r_{12}$}

		\end{overpic}
	\caption{Speyer--Williams fan $F_{3,6}$ (part 2)}
	\label{fig:F_36_fig2}
\end{figure}

In this section, we make this connection more precise by providing an explicit bijection between the rays of $F_{3,6}$ and the almost positive roots of type $D_4$. The cones of the fan correspond to clusters, with the exception of the two cones over a bipyramid, which correspond to two clusters each, glued together on their common face. 

The fan $F_{3,6}$ is a 4-dimensional fan whose intersection with a 3-sphere is illustrated in Figure~\ref{fig:F_36_fig1} (left) and Figure~\ref{fig:F_36_fig2}.
Each of the figures is a solid torus, and the two figures glue together to form a 3-sphere. The two bipyramids have vertices $\{r_1,r_5,r_7,r_{11},r_{13}\}$ and $\{r_4,r_8,r_{10},r_{15},r_{16}\}$.
These figures are reproduced from Speyer and Williams original figures~\cite[Figures~7 and~8]{speyer_tropical_2005}. In addition, we include part of the Auslander--Reiten quiver of type $D_4$, formed by repeating 4 copies of a bipartite quiver of type $D_4$. The 16 vertices of this ``repetition quiver" are labelled by the 16 almost positive roots as shown in Figure~\ref{fig:F_36_fig1} (middle). We also include the corresponding 16 pairs of chords in the geometric model in Figure~\ref{fig:F_36_fig1} (right).

The labeling with almost positive roots in Figure~\ref{fig:F_36_fig1} (middle) can be explained in two different ways. The first, and perhaps more intuitive one, assigns the negative simple roots to the vertices of the first copy of the $D_4$ quiver on the top, and the other labels are determined by rotation. Rotation sends a vertex in a copy of a $D_4$ quiver to the same vertex in the next copy directly below, if any. The last copy of the~$D_4$ quiver in the bottom is rotated to the first copy on the top. The rotation on almost positive roots is the one induced by rotation of chords in the geometric model. Recall that this is given by counterclockwise rotation by $\pi/4$ together with the special rule of exchanging central chords~$p^\textsc{r}$ and~$p^\textsc{l}$. A chord $\delta$ not in the initial snake is labeled by the positive root obtained by adding the simple roots corresponding to the chords of the snake that are crossed by $\delta$. Figure~\ref{fig:F_36_fig1} (right) illustrates this rotation process together with the root labeling of the chords. Note that rotating one more time the chords in the bottom picture recovers back the initial snake triangulation.   
The second explanation of the labeling by almost positive roots can be done in terms of inversions of a word $P=\tau_2 \tau_1 \tau_3 \tau_4 | \tau_2 \tau_1 \tau_3 \tau_4 | \tau_2 \tau_1 \tau_3 \tau_4 $. The word~$P$ is a reduced expression for the longest element of the Coxeter group and its inversions give all positive roots. Moreover, it consists of three copies of $\tau_2 \tau_1 \tau_3 \tau_4$, and its letters are in correspondence with the vertices of the last three copies of the bipartite quiver of type $D_4$ in Figure~\ref{fig:F_36_fig1} (middle). The labeling assigns to the vertices of these last three copies the inversions of $P$, and to the vertices of the first copy of the $D_4$ quiver the negative simple roots. This second explanation is based in work on subword complexes in~\cite{ceballos_subword_2014}, we refer to~\cite[Section~2.2]{ceballos_denominator_2015} for a concise and more detailed presentation. 

Let $\Psi$ be the bijection from the rays of Speyer--Williams fan $F_{3,6}$ and almost positive roots given in Figure~\ref{fig:bijection} (left and middle).

\begin{figure}[!htbp]
\begin{center}
\resizebox{\hsize}{!}{
\begin{tabular}{cc}
$\begin{array}{c@{\hspace{0.5cm}\longleftrightarrow\hspace{0.5cm}}c@{\hspace{0.5cm}\longleftrightarrow\hspace{0.5cm}}c}
\text{Rays of } F_{3,6} & \Phi_{\geq-1} \text{ of type } D_4  & \text{Chords}\T\B\\\hline
r_1= (0,0,1,0) & -\alpha_1 & 13\\
r_2= (0,0,-1,0) & \alpha_1 + \alpha_2 & 0\bar{2}\\
r_3= (1,0,0,0) & \alpha_3 & 3^\textsc{r}\\
r_4= (1,0,-1,0) & \alpha_1 +\alpha_2+\alpha_3 & 2^\textsc{r}\\
r_5= (-1,0,0,0) & -\alpha_3 & 0^\textsc{l}\\
r_6= (0,0,0,1) & \alpha_2+\alpha_3 & 1^\textsc{r}\\
r_7= (-1,0,0,1) & \alpha_2 & 0\bar{1} \\
r_8= (0,0,0,-1) & \alpha_1 + \alpha_2 + \alpha_4 & 2^\textsc{l}\\
\end{array}$
&
$\begin{array}{c@{\hspace{0.5cm}\longleftrightarrow\hspace{0.5cm}}c@{\hspace{0.5cm}\longleftrightarrow\hspace{0.5cm}}c}
\text{Rays of } F_{3,6} & \Phi_{\geq-1} \text{ of type } D_4  & \text{Chords}\T\B\\\hline
r_9= (0,0,1,-1) & \alpha_4 & 3^\textsc{l}\\
r_{10}= (1,0,0,-1) & \alpha_1+\alpha_2+\alpha_3+\alpha_4 & 2\bar{3}\\
r_{11}= (0,1,0,0) & -\alpha_4 & 0^\textsc{r}\\
r_{12}= (0,1,0,-1) & \alpha_1 & 02\\
r_{13}= (0,1,1,-1) & -\alpha_2 & 03\\
r_{14}= (0,-1,0,0) & \alpha_2+\alpha_4 & 1^\textsc{l}\\
r_{15}= (1,-1,0,0) & \alpha_2+\alpha_3+\alpha_4 & 1\bar{3}\\
r_{16}= (1,-1,-1,0) & \alpha_1+2\alpha_2+\alpha_3+\alpha_4 & 1\bar{2}
\end{array}$
\end{tabular}}
\caption{A bijection from the rays of $F_{3,6}$ to the almost positive roots of type $D_4$ and to centrally symmetric pairs of chords in the geometric model}
\label{fig:bijection}
\end{center}
\end{figure}

This bijection sends the vertices of the four bold $D_4$ quiver in Figure~\ref{fig:F_36_fig1}~(left) to the vertices of the four $D_4$ quivers in Figure~\ref{fig:F_36_fig1}~(middle), and exchanges the two external vertices of the second and fourth quiver as shown. Note that this special rule is similar to the rule of exchanging the corresponding central chords when rotating. The induced bijection between rays of the fan and centrally symmetric pairs of chords in the geometric model is illustrated in Figure~\ref{fig:bijection_rays_chords}.   

\begin{figure}[!htbp]
		\begin{overpic}[width=0.85\textwidth]{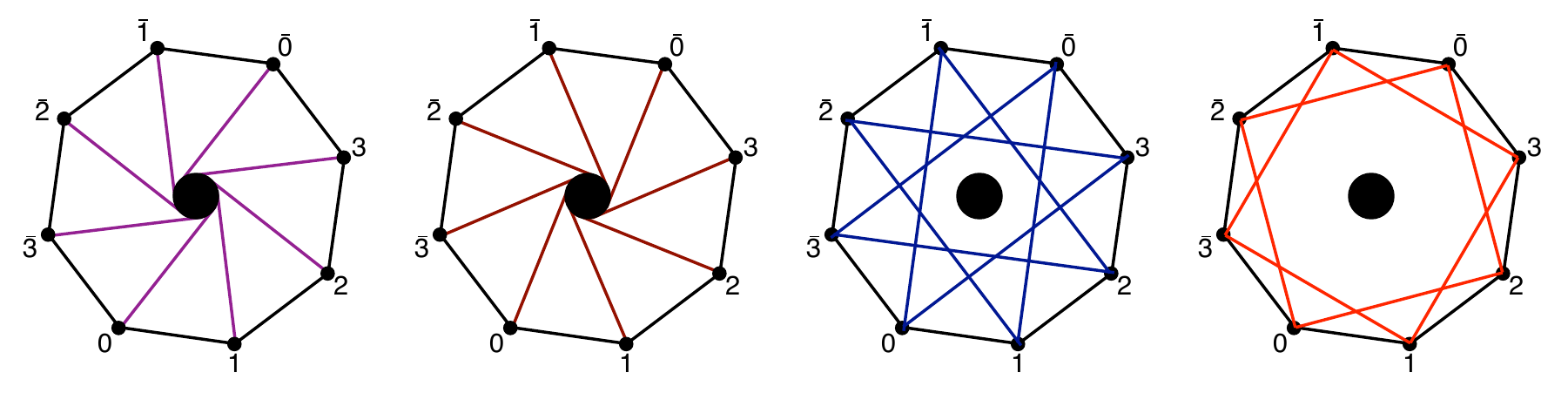}
			\put(10,6){\scriptsize $r_{11}$}
			\put(15,6.5){\scriptsize $r_6$}
			\put(18,10){\scriptsize $r_4$}
			\put(17.5,15.5){\scriptsize $r_3$}

			\put(39,18){\scriptsize $r_5$}
			\put(33.5,17.5){\scriptsize $r_{14}$}
			\put(30,15){\scriptsize $r_8$}
			\put(30.2,9.5){\scriptsize $r_9$}

			\put(68.7,12.1){\scriptsize $r_{13}$}			
			\put(67.2,17.7){\scriptsize $r_7$}
			\put(61.4,20.2){\scriptsize $r_{16}$}
			\put(55.7,17.5){\scriptsize $r_{10}$}

			\put(91.5,9.5){\scriptsize $r_1$}
			\put(92,14){\scriptsize $r_2$}
			\put(89,17.2){\scriptsize $r_{15}$}
			\put(84.7,17.5){\scriptsize $r_{12}$}

		\end{overpic}
	\caption{Bijection between rays of the fan $F_{36}$ and centrally symmetric pairs of chords in geometric model}
	\label{fig:bijection_rays_chords}
\end{figure}

\begin{theorem}
	Under the bijection $\Psi$, the cones of the fan $F_{3,6}$ correspond to clusters of type $D_4$, with the exception that the two cones over a bipyramid correspond to two clusters each, that are glued together on their common face:
\begin{figure}[!hbtp]
		\begin{overpic}[width=0.8\textwidth]{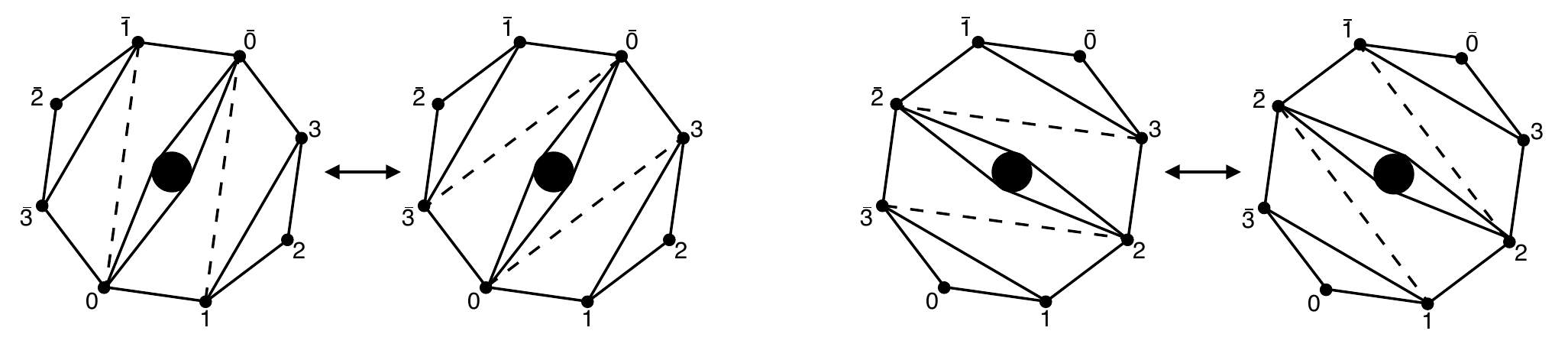}
			\put(12,-4){$\{r_1,r_5,r_7,r_{11},r_{13}\}$}
			\put(65,-4){$\{r_4,r_8,r_{10},r_{15},r_{16}\}$}
		\end{overpic}
	\label{fig:bipyramids}
\end{figure}
\end{theorem}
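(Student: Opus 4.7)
The plan is to verify the correspondence directly, using Speyer and Williams' explicit description of the maximal cones of $F_{3,6}$ in \cite[Proposition~6.1]{speyer_tropical_2005} together with the enumeration of clusters of type $D_4$ via pseudotriangulations of $\configD_4$ from Section~\ref{sec:clusteralgebras}. First, I would list the $48$ maximal cones of $F_{3,6}$ by their ray generators $\{r_{i_1},\dots,r_{i_k}\}$ (with $k=4$ for the generic cones and $k=5$ for the two bipyramidal cones), and apply the tabulated bijection $\Psi$ from Figure~\ref{fig:bijection} to translate each such set into a set of almost positive roots, hence to a set of centrally symmetric pairs of chords of $\configD_4$. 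For each resulting $4$-element set of chords I would check that it forms a centrally symmetric pseudotriangulation, \ie a cluster of type $D_4$; since there are $50$ clusters in total and the two bipyramids each split into two tetrahedra sharing a triangular face, the arithmetic matches: $48 - 2 + 2\cdot 2 = 50$.

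To keep this tractable I would exploit the symmetries on both sides. The fan $F_{3,6}$ carries an action induced by the cyclic rotation of $[6]$ (the index set of the Pl\"ucker coordinates), while the cluster complex of $D_4$ carries the type-$D_4$ rotation~$\tau$ described in Section~\ref{ssec:combi_type}. The labeling in Figure~\ref{fig:F_36_fig1}, where the four copies of the $D_4$ Auslander--Reiten quiver are obtained one from the next via $\tau$, is set up precisely so that $\Psi$ intertwines these two actions; the ``exchange of external labels'' in the second and fourth copies of the quiver matches exactly the rule in $\tau$ that swaps central chords~$p^\textsc{l}\leftrightarrow p^\textsc{r}$. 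This reduces the verification of the $48$ tetrahedral cones to checking a small set of orbit representatives under rotation (and the evident reflection symmetries), and likewise reduces the bipyramid analysis to a single bipyramid.

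For the bipyramids, I would identify the triangular ``equatorial'' face and the two apex rays, then check that the two $4$-element subsets obtained by choosing one apex at a time are both clusters. For $\{r_1,r_5,r_7,r_{11},r_{13}\}$ the image under $\Psi$ is $\{-\alpha_1,-\alpha_3,\alpha_2,-\alpha_4,-\alpha_2\}$, the apex pair is $\{\alpha_2,-\alpha_2\}$ (a positive root and its corresponding negative simple root, which cannot coexist in a cluster), and the two tetrahedra $\{-\alpha_1,-\alpha_2,-\alpha_3,-\alpha_4\}$ and $\{-\alpha_1,\alpha_2,-\alpha_3,-\alpha_4\}$ are respectively the snake pseudotriangulation~$T_0$ and its $-\alpha_2$ flip; they share the triangular face $\{-\alpha_1,-\alpha_3,-\alpha_4\}$, as required. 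The analogous verification for $\{r_4,r_8,r_{10},r_{15},r_{16}\}$, whose apex pair is $\{\alpha_{1234},\alpha_{12234}\}$, follows by symmetry (in fact by applying $\tau^2$).

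The main obstacle I anticipate is purely combinatorial bookkeeping: ensuring that every one of the $48$ cones of $F_{3,6}$ in Speyer--Williams' list translates under $\Psi$ to a genuine pseudotriangulation, and that no cluster among the $50$ is missed. The conceptual insight that makes this possible is the identification of the apparent asymmetry in $\Psi$ (the swap of the two external labels in the second and fourth $D_4$ quivers) with the central-chord swap in the definition of $\tau$; once this is recognised, the remaining work is a finite, symmetry-reduced table check, which can be done either by hand on orbit representatives or by a short computer enumeration.
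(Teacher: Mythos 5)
Your proposal is correct but takes a genuinely different route from the paper. The paper's proof uses the Fomin--Zelevinsky compatibility-degree framework: because the cluster complex of finite type is a flag (clique) complex, it is entirely determined by its $1$-skeleton, so it suffices to verify that the $66$ edges appearing in Figures~\ref{fig:F_36_fig1} and~\ref{fig:F_36_fig2} are exactly the pairs $\{\Psi(r_i),\Psi(r_j)\}$ with $(\Psi(r_i)\,\|\,\Psi(r_j))=0$; this is checked for pairs involving a negative simple root and then propagated by the $\tau$-rotation (which is where the special swap in $\Psi$ earns its keep, just as you observed), after which taking the clique complex of the compatibility graph finishes the argument. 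You instead propose a top-down verification: list all $48$ maximal cones of $F_{3,6}$, translate each ray set through $\Psi$ into a set of chords, and check directly that each tetrahedral cone gives a pseudotriangulation while each bipyramid splits into two pseudotriangulations sharing a common face. This is sound: since $\Psi$ is a bijection on rays, distinct cones yield distinct chord sets, the count $46+2\cdot 2=50$ matches the number of clusters, and a simplicial complex is determined by its maximal faces. The paper's route is leaner (one checks compatibility of $O(16)$ orbit-reduced pairs instead of the maximality of $O(48)$ orbit-reduced cones) and more conceptual in that it exploits the flag property; your route is more self-contained, avoiding the compatibility-degree machinery at the cost of more enumeration. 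Both approaches rely on the same symmetry observation — that $\Psi$ intertwines the $\tau$-rotation with the cyclic symmetry of $F_{3,6}$ — and your explicit verification of the two bipyramids (apex pairs $\{\alpha_2,-\alpha_2\}$ and $\{\alpha_{1234},\alpha_{12234}\}$, related by $\tau^2$) is correct and matches the apexes identified in Table~\ref{tab:cone_partition}.
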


\begin{proof}
The proof of this result uses the description of cluster complexes in terms of the compatibility degrees of Fomin and Zelevinsky in~\cite[Section~3]{FominZelevinsky-YSystems}. The compatibility degree is a map 
\[
\begin{array}{ccc}
\Phi_{\geq -1} \times \Phi_{\geq -1}  & \longrightarrow  & \Z  \\
(\alpha,\beta)  & \longmapsto  & (\alpha\, || \, \beta)  
\end{array}
\]
characterized by the two properties:
\begin{eqnarray}
& (-\alpha_i\, || \, \beta) = b_i, & \text{for all } i \in [n] \text{ and } \beta = \sum b_i \alpha_i \in \Phi_{\ge-1}, \label{compatibility-relation1} \\
& (\alpha\, || \, \beta) = (\tau \alpha\, || \,  \tau \beta),  & \text{for all } \alpha,\beta \in \Phi_{\ge-1}, \label{compatibility-relation2}
\end{eqnarray}
where $\tau$ is the rotation operation on almost positive roots defined in Section~\ref{ssec:combi_type}. Two almost positive roots are said to be compatible if and only if their compatibility degree is zero. The cluster complex is the simplicial complex whose faces are sets of pairwise compatible roots. This complex is completely determined by its edges (1-dimensional simplices), and so it suffices to show that the edges in Figure~\ref{fig:F_36_fig1} (left) and Figure~\ref{fig:F_36_fig2} correspond exactly to the compatible pairs of almost positive roots under the map $\Psi$. This can be checked by inspection for the pairs involving a negative simple root, and by rotating the figures to obtain all other pairs. 
For instance, $-\alpha_1$ is compatible with $-\alpha_2,-\alpha_3,-\alpha_4,\alpha_2,\alpha_{23},\alpha_{24},\alpha_{234},\alpha_3,$ and~$\alpha_4$, while~$r_1=\Psi^{-1}(-\alpha_1)$ forms edges with the corresponding rays $r_{13},r_{5},r_{11},r_{7},r_{6},r_{14},r_{15},r_{3},$ and $r_{9}$.
The pairs of compatible roots that do not appear as edges in Figure~\ref{fig:F_36_fig1} (left) but do (in bold) in Figure~\ref{fig:F_36_fig2} are: 
\[
	\begin{array}{cccccc}
	(-\alpha_1, \alpha_{234})  & (-\alpha_3, \alpha_{124}) & (-\alpha_4, \alpha_{123}) &
	(\alpha_{12}, \alpha_{1})  & (\alpha_{23}, \alpha_{3})  & (\alpha_{24}, \alpha_{4})  \\
	(r_1,r_{15}) & (r_5,r_{8}) & (r_{11},r_{4}) &
	(r_2,r_{12}) & (r_6,r_{3}) & (r_{14},r_{9})  
\end{array}
\]
Taking the click complex of the compatibility relation finishes the proof.
\end{proof}

\section{Tropical Computations}
\label{sec:computations}
In this section, we compute the fan $F_{3,6}$ of Speyer and Williams \cite{speyer_tropical_2005} and analyze which tropical planes in $\mathbb{TP}^5$ are realized by $\Gr^+(3,6)$. 
We follow suit and compute $F_{3,6}$ in the same fashion as Speyer and Williams would in \cite{speyer_tropical_2005}. First we draw the web diagram $\Web_{3,6}$ and label its interior regions as shown in Figure \ref{fig:Web36}. This is the labeling used by Speyer and Williams in their computations.

\begin{figure}[!htbp]
\begin{center}
\begin{tikzpicture}[fleche/.style={<-,line width=2pt,>=stealth, cap=round}]

\def\dim{1.25}

\draw[fleche] (0,0) -- (\dim,0);
\draw[fleche] (\dim,0) -- (2*\dim,0);
\draw[fleche] (2*\dim,0) -- (2.5*\dim,0);

\draw[fleche] (0,-\dim) -- (\dim,-\dim);
\draw[fleche] (\dim,-\dim) -- (2*\dim,-\dim);
\draw[fleche] (2*\dim,-\dim) -- (2.5*\dim,-\dim);

\draw[fleche] (0,-2*\dim) -- (\dim,-2*\dim);
\draw[fleche] (\dim,-2*\dim) -- (2*\dim,-2*\dim);
\draw[fleche] (2*\dim,-2*\dim) -- (2.5*\dim,-2*\dim);

\draw[fleche] (0,-\dim) -- (0,0);
\draw[fleche] (\dim,-\dim) -- (\dim,0);
\draw[fleche] (2*\dim,-\dim) -- (2*\dim,0);

\draw[fleche] (0,-2*\dim) -- (0,-\dim);
\draw[fleche] (\dim,-2*\dim) -- (\dim,-\dim);
\draw[fleche] (2*\dim,-2*\dim) -- (2*\dim,-\dim);

\draw[fleche] (0,-2.5*\dim) -- (0,-2*\dim);
\draw[fleche] (\dim,-2.5*\dim) -- (\dim,-2*\dim);
\draw[fleche] (2*\dim,-2.5*\dim) -- (2*\dim,-2*\dim);

\node[label=right:{$1$}] at (2.5*\dim,0) {};
\node[label=right:{$2$}] at (2.5*\dim,-\dim) {};
\node[label=right:{$3$}] at (2.5*\dim,-2*\dim) {};

\node[label=below:{$6$}] at (0,-2.5*\dim) {};
\node[label=below:{$5$}] at (\dim,-2.5*\dim) {};
\node[label=below:{$4$}] at (2*\dim,-2.5*\dim) {};

\node at (0.5*\dim,-0.5*\dim) {$x_4$};
\node at (1.5*\dim,-0.5*\dim) {$x_2$};
\node at (0.5*\dim,-1.5*\dim) {$x_3$};
\node at (1.5*\dim,-1.5*\dim) {$x_1$};

\end{tikzpicture}
\caption{The labeling of the web diagram $\Web_{3,6}$}
\label{fig:Web36}
\end{center}
\end{figure}
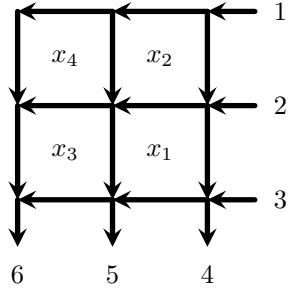

The fan $F_{3,6}$ is the complete fan in $\mathbb{R}^4$ whose maximal cones are the domains of linearity of the piecewise linear map
\[
	\Trop \Phi_2: \mathbb{R}^4\rightarrow \Gr^+(3,6)/(\Trop \phi)(\mathbb{R}^6),
\]
where $\Trop \phi$ is the map sending $(a_1,...,a_6)$ to the ${6 \choose 3}$-vector whose $(i_1,i_2,i_3)$-coordinate is $a_{i_1}+ a_{i_2} + a_{i_3}$; its image is the common lineality space of all cones in $\Gr^+(3,6)$. The map $\Trop \Phi_2$ is defined by the tropicalization of the maximal minors of the $3\times 6$ matrix $A_{3,6}$, whose entries are given by $$a_{ij}=(-1)^{i+1}\sum_p \mbox{Prod}_p,$$ where we are summing over all paths $p$ from $i$ to $j$ in $\Web_{3,6}$, and $\mbox{Prod}_p$ is the product of all the variables $x_i$ appearing below a given path $p$. Specifically, the matrix we get is

\[
A_{3,6}=
\left[
\begin{array}{rrrrrr}
1 & 0 & 0 & 1 & x_1x_2+x_1+1 & x_1x_2x_3x_4+x_1x_2x_3+x_1x_3+x_1x_2+x_1+1 \\
0 & -1 & 0 & -1 & -(x_1+1) & -(x_1x_3+x_1+1) \\
0 & 0 & 1 & 1 & 1 & 1
\end{array}
\right].
\]

The tropicalization of the maximal minors $P_{ijk}$ of $A_{3,6}$ are the following:

\[
\begin{array}{ll}
	P_{123}=0 & P_{234}=0 \\
	P_{124}=0 & P_{235}=\min\{0,x_1,x_1+x_2\} \\
	P_{125}=0 & P_{236}=\min\{0,x_1,x_1+x_2,x_1+x_3,x_1+x_2+x_3,x_1+x_2+x_3+x_4\} \\
	P_{126}=0 & P_{245}=\min\{x_1,x_1+x_2\} \\
	P_{134}=0 & P_{246}=\min\{x_1,x_1+x_2,x_1+x_3,x_1+x_2+x_3,x_1+x_2+x_3+x_4\} \\
	P_{135}=\min\{0,x_1\} & P_{256}=\min\{x_1+x_3,x_1+x_2+x_3,x_1+x_2+x_3+x_4\} \\
	P_{136}=\min\{0,x_1,x_1+x_3\} & P_{345}=x_1+x_2 \\
	P_{145}=x_1 & P_{346}=\min\{x_1+x_2,x_1+x_2+x_3,x_1+x_2+x_3+x_4\} \\
	P_{146}=\min\{x_1,x_1+x_3\} & P_{356}=\min\{x_1+x_2+x_3,x_1+x_2+x_3+x_4,2x_1+x_2+x_3+x_4\} \\
	P_{156}=x_1+x_3 & P_{456}=2x_1+x_2+x_3+x_4
\end{array}
\]

We then have that each $P_{ijk}$ gives rise to a fan $F(P_{ijk})$, and the simultaneous refinement of all of these fans is $F_{3,6}$. We compute this refinement using \texttt{Sage} \cite{sage} and get the rays:

\begin{center}
\[
	\begin{array}{l@{\hspace{1cm}}l@{\hspace{1cm}}l@{\hspace{1cm}}l}
	r_1= (0,0,1,0) & r_5= (-1,0,0,0) & r_9= (0,0,1,-1) & r_{13}= (0,1,1,-1) \\
	r_2= (0,0,-1,0) & r_6= (0,0,0,1) &  r_{10}= (1,0,0,-1) & r_{14}= (0,-1,0,0) \\
	r_3= (1,0,0,0) & r_7= (-1,0,0,1) & r_{11}= (0,1,0,0) & r_{15}= (1,-1,0,0) \\
		r_4= (1,0,-1,0) & r_8= (0,0,0,-1) & r_{12}= (0,1,0,-1) & r_{16}= (1,-1,-1,0)
	\end{array}
\]
\end{center}

Now that we have $F_{3,6}$, we would like to see which combinatorial types of generic planes in $\mathbb{TP}^5$ are realized by $\Gr^+(3,6)$. Speyer and Sturmfels \cite{speyer_tropical_2004} are the first to describe $\Gr(3,6)$ as the parameter space for tropical planes in $\mathbb{TP}^5$ and a recipe for computing which planes in $\mathbb{TP}^5$ realized where in $\Gr(3,6)$ is given by Herrmann, Jensen, Joswig, and Sturmfels in \cite{herrmann_how_2009}. We follow this recipe to compute which planes in $\mathbb{TP}^5$ are realized by $\Gr^+(3,6)$; for each maximal cone $C$ of $F_{3,6}$, the recipe goes as follows:

\begin{enumerate}
\item Choose an interior point $\lambda$ of $C$.
\item Compute its image $\Trop \Phi_2 (\lambda)$.
\item By Proposition \ref{proposition 3.1}, we know $\Trop \Phi_2 (\lambda)$ induces a matroid subdivision of the hypersimplex $\triangle(3,6)$; Compute this subdivision using \texttt{Polymake} \cite{polymake}.
\item Compare the computed matroid subdivision with the matroid subdivisions given in \cite{herrmann_how_2009} used to classify combinatorial types of generic tropical planes in $\mathbb{TP}^5$.
\end{enumerate}

Step (4) of the recipe was done by computing the face lattices of each matroid polytope for the matroids and the dimension of the intersections in the computed subdivision. Comparing them to the face lattices of the matroid polytopes of the matroids in the subdivisions given in \cite{herrmann_how_2009} classifies the combinatorial types of generic tropical planes in $\mathbb{TP}^5$. The graphical representation in Figure 1 of \cite{herrmann_how_2009} shows the neighboring properties using edges and 2-cells. The difference between EEFFa and EEFFb is as follows: in Type EEFFa, there are two matroid polytopes that do not intersect, whereas in Type EEFFb, there are three 2-dimensional intersections between the matroid polytopes. These computations were all made using \texttt{Sage} \cite{sage}. As the combinatorial type of plane does not change within a maximal cone \cite{herrmann_how_2009}, by following the recipe above for each maximal cone of $F_{3,6}$, we get all planes realized by $\Gr^+(3,6)$. 

\begin{theorem}\label{thm:computation}
Exactly six of the seven combinatorial types of tropical planes in $\mathbb{TP}^5$ are realized by $\Gr^+(3,6)$. As named by Sturmfels and Speyer \cite{speyer_tropical_2004}, the realizable combinatorial types are EEEG, EEFFa, EEFFb, EEFG, EFFG, and FFFGG. 
\end{theorem}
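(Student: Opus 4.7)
The proof is essentially a finite computer-verified enumeration following the recipe of Herrmann--Jensen--Joswig--Sturmfels spelled out just before the theorem statement, so the plan is to describe it carefully and then focus on the two genuinely non-routine points: producing a list of maximal cones that is demonstrably complete, and reliably telling the seven combinatorial types apart (in particular EEFFa from EEFFb).

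The first step is to make the fan $F_{3,6}$ completely explicit. I would start from the sixteen tropical Pl\"ucker coordinates $P_{ijk}(x_1,x_2,x_3,x_4)$ already written out from the web diagram $\Web_{3,6}$, treat each $P_{ijk}$ as a min-tropical polynomial, and take the common refinement of the associated normal fans in $\mathbb{R}^4$. In \texttt{Sage} this is a single polyhedral common-refinement computation, and it must reproduce the ray list $r_1,\dots,r_{16}$ and the $f$-vector $(16,66,98,48)$ recorded by Speyer--Williams. Checking both of these invariants against the known answer is the crucial sanity check that the list of $48$ maximal cones I will iterate over is the correct one.

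Next, for each of the $48$ maximal cones $C$, I would execute the four-step recipe from the paper: pick an interior lattice point $\lambda\in C$ (e.g.\ the sum of the rays of $C$), compute $\Trop\Phi_2(\lambda)\in\mathbb{R}^{\binom{6}{3}}$ by evaluating the sixteen expressions $P_{ijk}$ at~$\lambda$, feed the resulting height vector on the vertices of $\Delta(3,6)$ into \texttt{Polymake} as a regular subdivision, and read off the list of matroid polytopes appearing as maximal cells. By Proposition~\ref{proposition 3.1} this subdivision is automatically matroidal; by \cite{herrmann_how_2009} the combinatorial type of the tropical plane is determined by it and is constant on the interior of~$C$, so one representative $\lambda$ per cone suffices. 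I would then compare the combinatorial data of the subdivision to the seven reference subdivisions of Speyer--Sturmfels via a canonical combinatorial invariant: the multiset of isomorphism types of the face lattices of the maximal cells together with the dimensions of their pairwise intersections. This invariant is strong enough to separate all seven types, including EEFFa versus EEFFb, because in EEFFa two of the four matroid polytopes do not intersect while in EEFFb there are three two-dimensional pairwise intersections, as explicitly noted in the paragraph preceding the theorem.

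The main obstacle, and really the only one, is reliably distinguishing EEFFa from EEFFb, since these two types share the same multiset of matroid isomorphism classes and are only separated by the incidence pattern of their cells. I would handle this by encoding each subdivision as an abstract simplicial poset on its matroid cells with edges labeled by the dimension of the intersection, and checking this labeled poset against the two reference posets extracted directly from Figure~1 of \cite{herrmann_how_2009}. Once the classification of all $48$ cones is tabulated, the theorem follows by inspection: each of the six types EEEG, EEFFa, EEFFb, EEFG, EFFG, FFFGG appears at least once among the $48$ maximal cones, while the remaining Speyer--Sturmfels type never does, and since the combinatorial type is constant on each maximal cone and $F_{3,6}$ is a complete fan, this exhausts all tropical planes coming from $\Gr^+(3,6)$.
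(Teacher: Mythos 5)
Your proposal follows essentially the same computation as the paper: take the common refinement of the tropicalized Pl\"ucker minors to obtain $F_{3,6}$, pick one interior point per maximal cone, compute the induced regular matroid subdivision of $\triangle(3,6)$ in \texttt{Polymake}, and classify it by face lattices of the matroid cells and their intersection dimensions, using exactly the paper's criterion (non-intersecting cells versus three $2$-dimensional intersections) to separate EEFFa from EEFFb. The only addition is the $f$-vector sanity check, which is a reasonable safeguard but does not change the argument.
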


The partition into the combinatorial types is shown in Table~\ref{tab:cone_partition}.

\begin{table}[!htbp]
\begin{tabular}{ll}
Type EEEG: & $\{r_3,r_9,r_{10},r_{12}\},\{r_2,r_6,r_{14},r_{16}\},\{r_3,r_9,r_{12},r_{13}\},\{r_2,r_6,r_{7},r_{14}\}$ \T\B\\\hline
Type EEFFa: & $\{r_3,r_4,r_{6},r_{15}\},\{r_1,r_3,r_{6},r_{11}\},\{r_2,r_5,r_{8},r_{12}\},\{r_2,r_5,r_{11},r_{12}\},$\T\B\\
& $\{r_1,r_3,r_{6},r_{15}\},\{r_1,r_5,r_{9},r_{14}\},\{r_2,r_4,r_{8},r_{12}\},\{r_3,r_4,r_{6},r_{11}\},$ \T\B\\
& $\{r_5,r_8,r_{9},r_{14}\},\{r_8,r_9,r_{14},r_{15}\},\{r_2,r_4,r_{11},r_{12}\},\{r_1,r_9,r_{14},r_{15}\}$ \T\B\\\hline
Type EEFFb: & $\{r_2,r_5,r_{8},r_{14}\},\{r_1,r_3,r_{9},r_{15}\},\{r_2,r_4,r_{6},r_{11}\},\{r_5,r_8,r_{9},r_{12}\},$ \T\B\\
& $\{r_1,r_6,r_{14},r_{15}\},\{r_3,r_4,r_{11},r_{12}\}$ \T\B\\\hline
Type EEFG: & $\{r_5,r_9,r_{12},r_{13}\},\{r_3,r_9,r_{10},r_{15}\},\{r_3,r_4,r_{10},r_{12}\},\{r_3,r_{11},r_{12},r_{13}\},$ \T\B\\
& $\{r_1,r_{3},r_{9},r_{13}\},\{r_6,r_{14},r_{15},r_{16}\},\{r_1,r_{6},r_{7},r_{14}\},\{r_2,r_{8},r_{14},r_{16}\},$ \T\B\\
& $\{r_2,r_{5},r_{7},r_{14}\},\{r_8,r_{9},r_{10},r_{12}\},\{r_2,r_{4},r_{6},r_{16}\},\{r_2,r_{6},r_{7},r_{11}\}$ \T\B\\\hline
Type EFFG: & $\{r_8,r_{9},r_{10},r_{15}\},\{r_1,r_{5},r_{9},r_{13}\},\{r_1,r_{5},r_{7},r_{14}\},\{r_2,r_{4},r_{8},r_{16}\},$ \T\B\\
& $\{r_1,r_{3},r_{11},r_{13}\},\{r_2,r_{5},r_{7},r_{11}\},\{r_8,r_{14},r_{15},r_{16}\},\{r_3,r_{4},r_{10},r_{15}\},$ \T\B\\
& $\{r_4,r_{6},r_{15},r_{16}\},\{r_5,r_{11},r_{12},r_{13}\},\{r_1,r_{6},r_{7},r_{11}\},\{r_4,r_{8},r_{10},r_{12}\}$ \T\B\\\hline
Type FFFGG: & $\{r_4,r_{8},\underline{r_{10}},r_{15},\underline{r_{16}}\},\{r_1,r_{5},\underline{r_{7}},r_{11},\underline{r_{13}}\}$\T\B\\[1.5em]
\end{tabular}
\caption{The partition of the cone of the positive tropical Grassmannian into the corresponding combinatorial type of plane. In type FFFGG, the underlined rays represent the apexes of the splitted bipyramid to get the cluster complex.}
\label{tab:cone_partition}
\end{table}

\section{Comparing Tropical Planes and Pseudotriangulations}
\label{sec:comparing}

Noting that $\Dr(3,6)$ and $\Gr(3,6)$ are equal as sets, Speyer and Sturmfels describe $\Gr(3,6)$ as the parameter space for tropical planes in $\mathbb{TP}^5$.
Using Theorem~\ref{thm:computation}, we can deduce how the equivalence of tropical planes compares with equivalence of pseudotriangulations.

\begin{theorem}
The combinatorial types of tropical planes in $\mathbb{TP}^5$ and the combinatorial types of pseudotriangulations of~$\configD_4$ intersect transversally as illustrated in Table~\ref{tab:type}.
\end{theorem}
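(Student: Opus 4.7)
The plan is to compare the two classifications cone by cone via the bijection~$\Psi$ from Section~\ref{sec:connecting}. Theorem~4.1 identifies each maximal cone of $F_{3,6}$ with a cluster of type $D_4$, with the two bipyramidal cones each corresponding to two clusters glued along a common tetrahedral face. Theorem~\ref{thm:computation} already assigns to each such cone one of the six realized combinatorial types of tropical planes (Table~\ref{tab:cone_partition}). Combining the two data sets yields, for every one of the 50 clusters, both an invariant ``type of tropical plane'' and an invariant ``type of pseudotriangulation,'' and Table~\ref{tab:type} is to be filled in by tabulating these two invariants.

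Concretely, I would first turn every entry of Table~\ref{tab:cone_partition} into a pseudotriangulation: for each 4-element list of rays $\{r_{i_1},r_{i_2},r_{i_3},r_{i_4}\}$ in Table~\ref{tab:cone_partition}, apply the bijection of Figures~\ref{fig:bijection} and~\ref{fig:bijection_rays_chords} to read off the four centrally symmetric pairs of chords; for the two 5-element bipyramidal cones listed in the last row, split each bipyramid into the two tetrahedra indicated by the underlined apex rays and record the two resulting pseudotriangulations separately. This produces the 50 pseudotriangulations of~$\configD_4$ sorted by their ``plane type.''

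Next, for each of those 50 pseudotriangulations, I would determine its combinatorial class among the seven classes of Figure~\ref{fig:combinatorial_pseudotriang} by checking equivalence modulo reflections, $\tau$-rotation, and global exchange of central chords, as in Proposition~2.1; labels $0,\dots,49$ from Figure~\ref{fig:combinatorial_pseudotriang} make this a direct look-up. Cross-tabulating the ``plane type'' with the ``pseudotriangulation type'' then produces Table~\ref{tab:type}. Transversality means precisely that the resulting two partitions of the 50 pseudotriangulations refine each other nontrivially in both directions: to witness this, I would exhibit two pseudotriangulations in a common class of Figure~\ref{fig:combinatorial_pseudotriang} whose associated cones in Table~\ref{tab:cone_partition} realize distinct plane types, and, conversely, two pseudotriangulations lying in a common plane type whose pseudotriangulation classes differ.

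The main obstacle is careful bookkeeping rather than conceptual difficulty: one must propagate the correspondence through three layers (maximal cone $\longrightarrow$ pseudotriangulation $\longrightarrow$ reflection/rotation/central-swap class) consistently across all 50 clusters, and treat with care the two bipyramidal cones, which are the only place where a single cone subdivides into two distinct clusters and where the non-realized combinatorial type \textbf{FFFGG} forces special handling. This step is best carried out with the computational data of Sections~\ref{sec:connecting} and~\ref{sec:computations} loaded into a computer algebra system, using the explicit rays in Figure~\ref{fig:bijection} as the common translation key between the tropical and combinatorial sides.
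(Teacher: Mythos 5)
Your proposal matches the paper's (implicit) argument: the theorem carries no separate proof text, its content being exactly the cross-tabulation of Table~\ref{tab:cone_partition} (plane types per cone of $F_{3,6}$) against the seven pseudotriangulation classes of Figure~\ref{fig:combinatorial_pseudotriang}, translated via the bijection $\Psi$ and the splitting of the two bipyramidal cones, which is precisely what you describe. The only thing you add is an explicit gloss on what ``transversal'' means here (neither partition refines the other), which is consistent with how the paper uses the word.
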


\begin{table}[!htbp]
\begin{tabular}{rc||c}
	& Type EEFG & Type EFFG \T\B\\
	Type T1: & \resizebox{0.40\hsize}{!}{\includegraphics{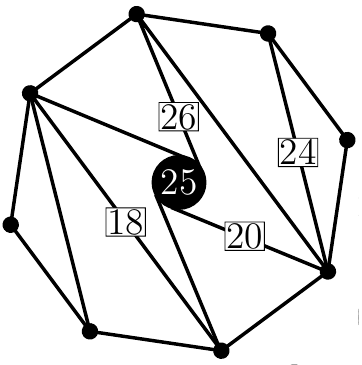} \includegraphics{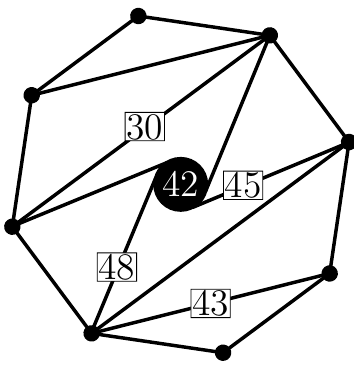} \includegraphics{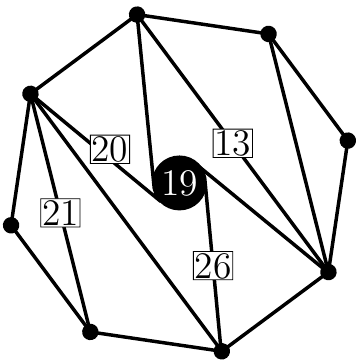} \includegraphics{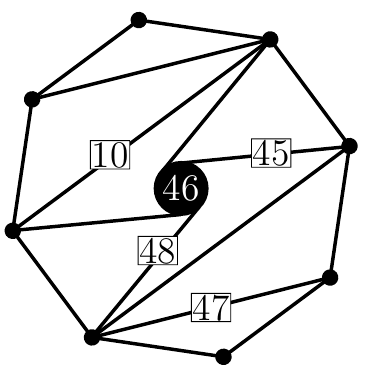}} & \resizebox{0.40\hsize}{!}{\includegraphics{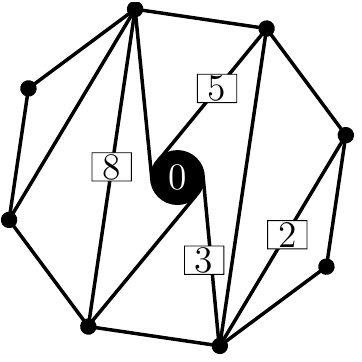} \includegraphics{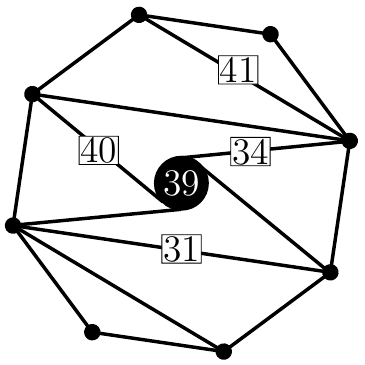} \includegraphics{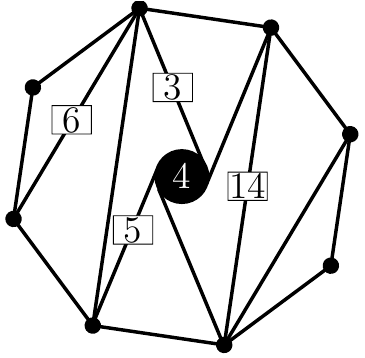} \includegraphics{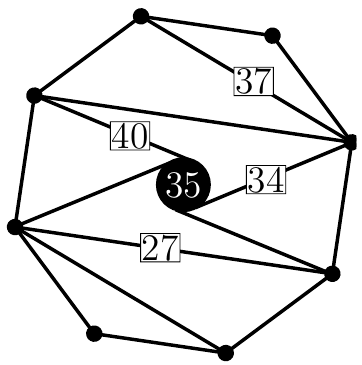}} \T\B\\
	& \resizebox{0.40\hsize}{!}{\includegraphics{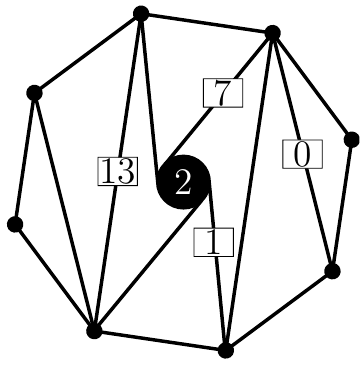} \includegraphics{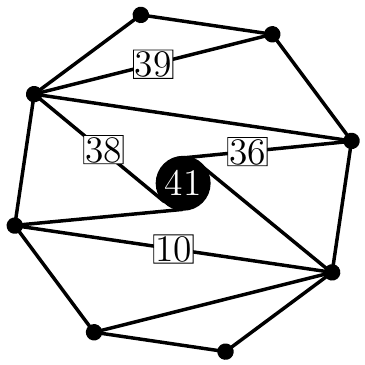} \includegraphics{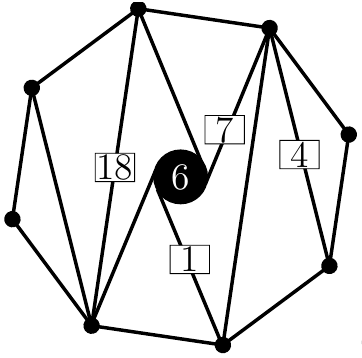} \includegraphics{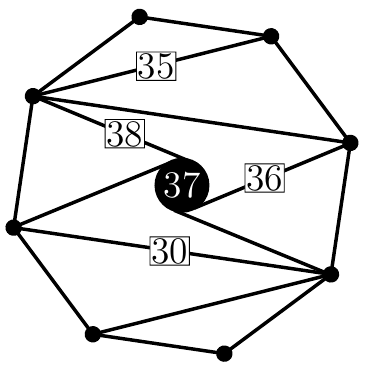}} & \resizebox{0.40\hsize}{!}{\includegraphics{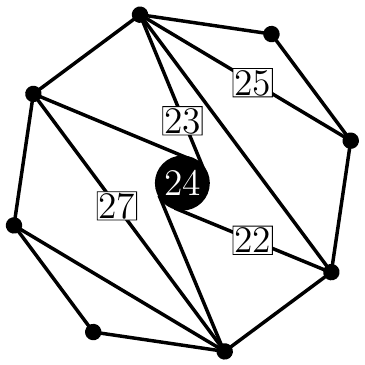} \includegraphics{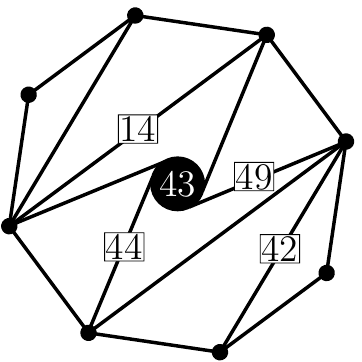} \includegraphics{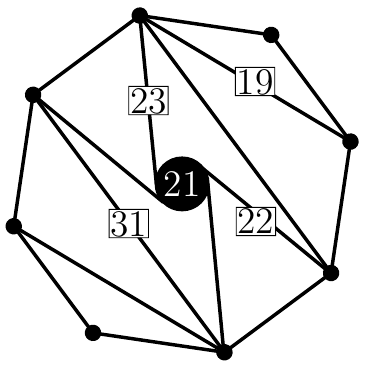} \includegraphics{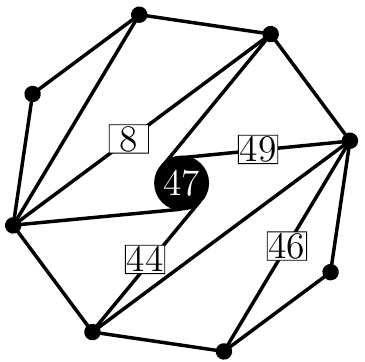}} \T\B\\\hline
	Type T2: & \resizebox{0.40\hsize}{!}{\includegraphics{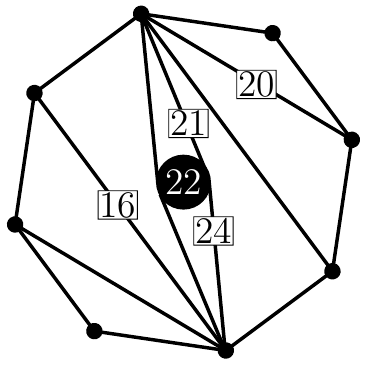} \includegraphics{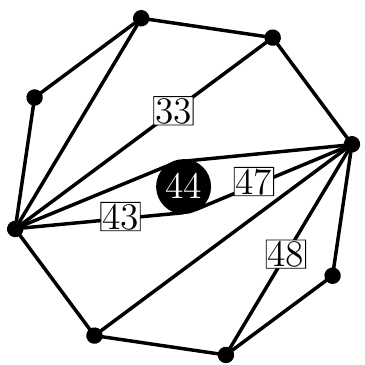} \includegraphics{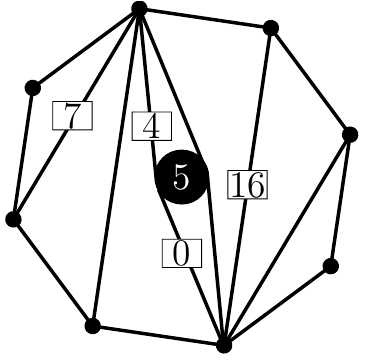} \includegraphics{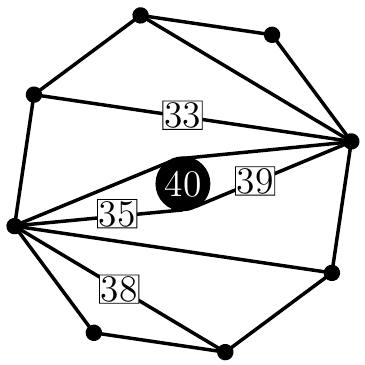}} & \resizebox{0.40\hsize}{!}{\includegraphics{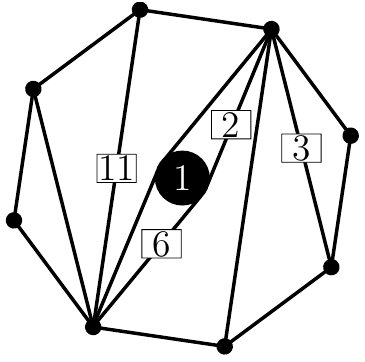} \includegraphics{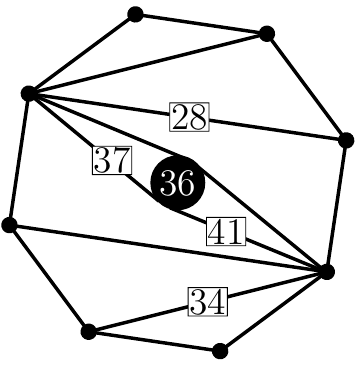} \includegraphics{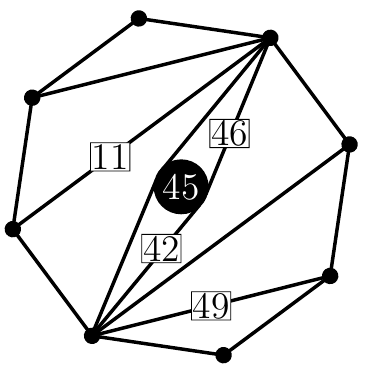} \includegraphics{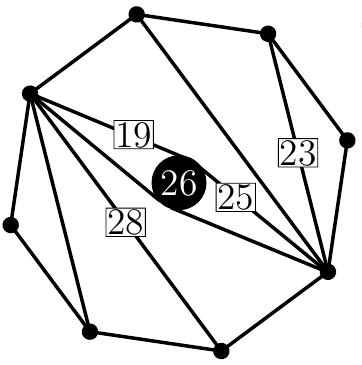}} \T\B\\\hline\hline
	& Type EEEG & Type FFFGG \T\B\\
	Type T3: & \resizebox{0.40\hsize}{!}{\includegraphics{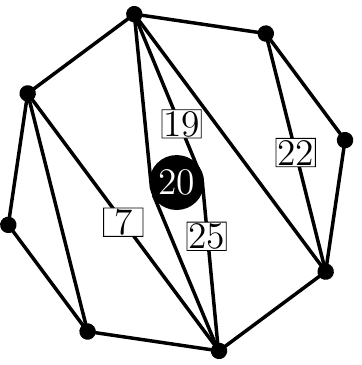} \includegraphics{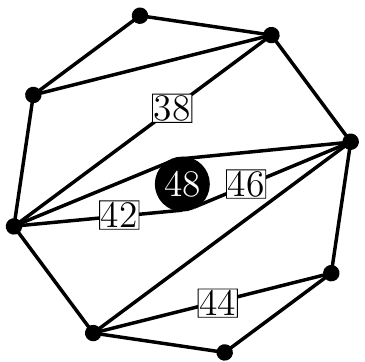} \includegraphics{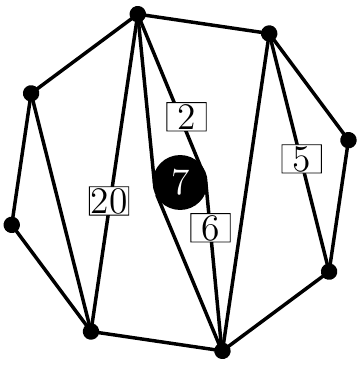} \includegraphics{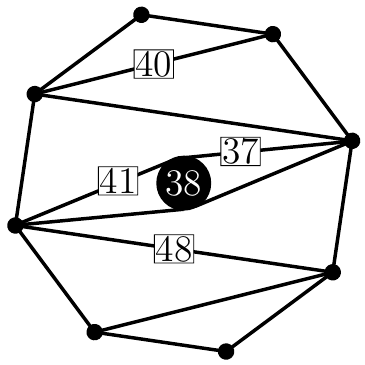}} & \resizebox{0.40\hsize}{!}{\includegraphics{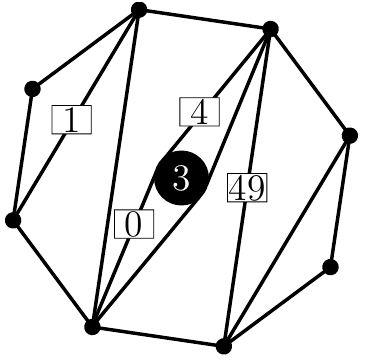} \includegraphics{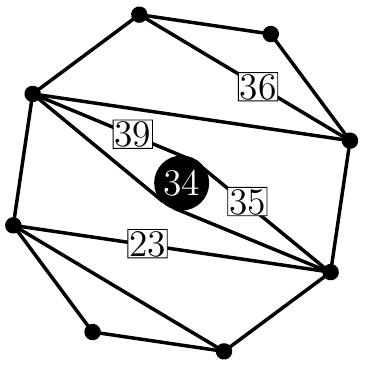} \includegraphics{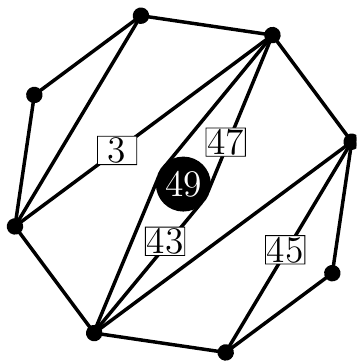} \includegraphics{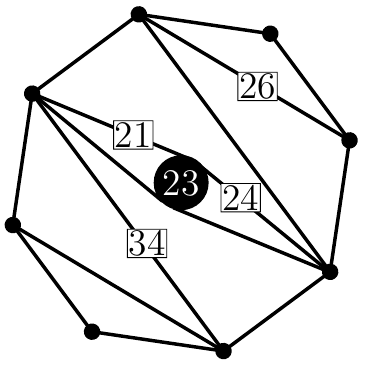}} \T\B\\\hline\hline
	& Type EEFFa & Type EEFFb \T\B\\
	Type T4: & \resizebox{0.20\hsize}{!}{\includegraphics{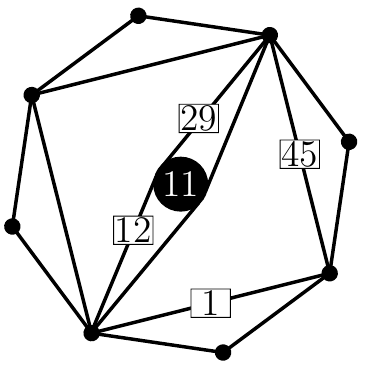} \includegraphics{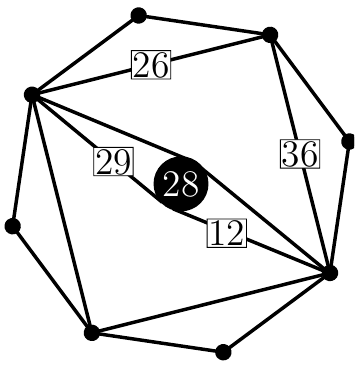}} & \resizebox{0.20\hsize}{!}{\includegraphics{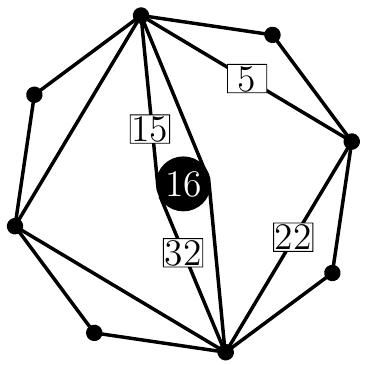} \includegraphics{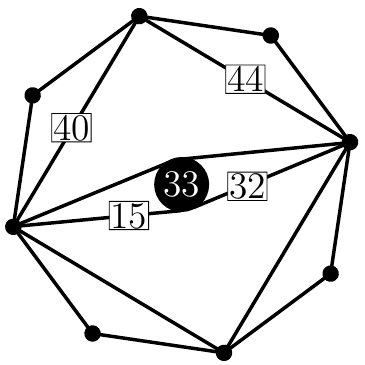}} \T\B\\\hline
	Type T5: & \resizebox{0.40\hsize}{!}{\includegraphics{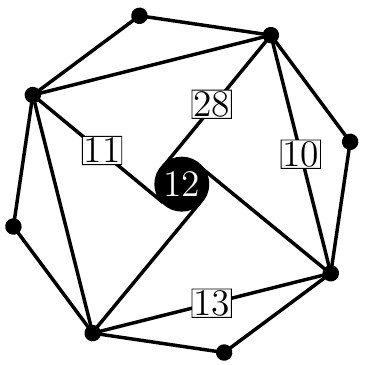} \includegraphics{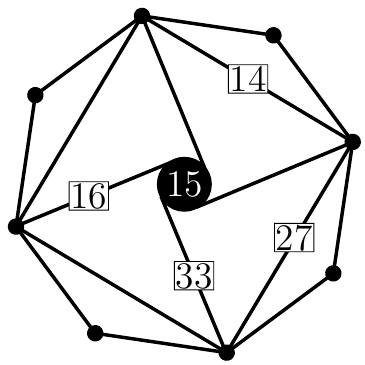} \includegraphics{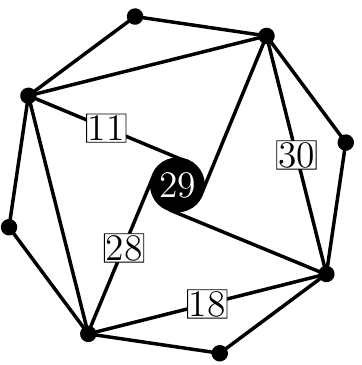} \includegraphics{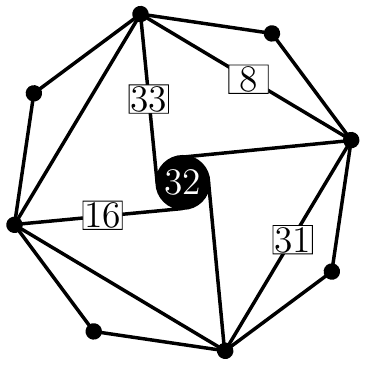}} & \T\B\\\hline
	Type T6: & \resizebox{0.40\hsize}{!}{\includegraphics{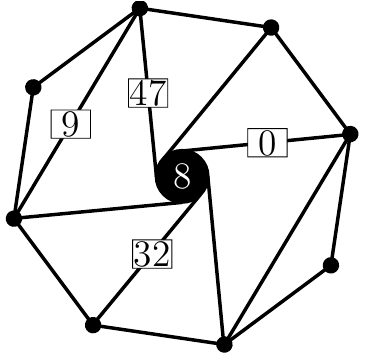} \includegraphics{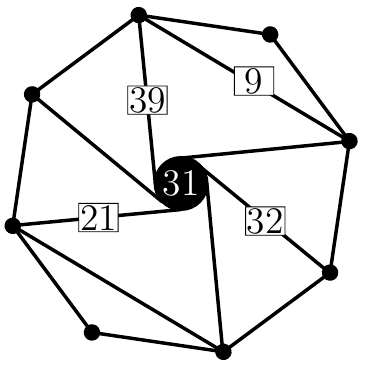} \includegraphics{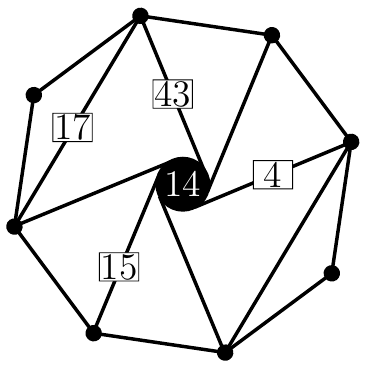} \includegraphics{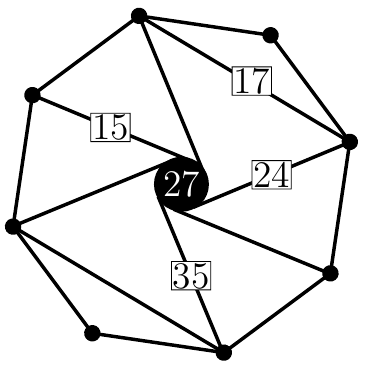}} & \resizebox{0.40\hsize}{!}{\includegraphics{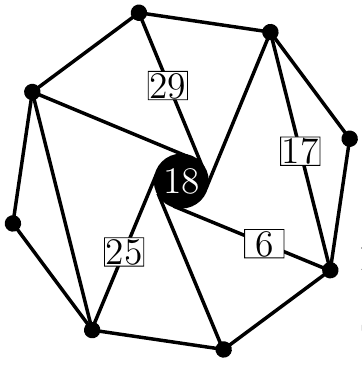} \includegraphics{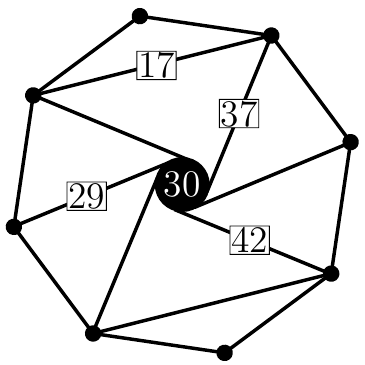} \includegraphics{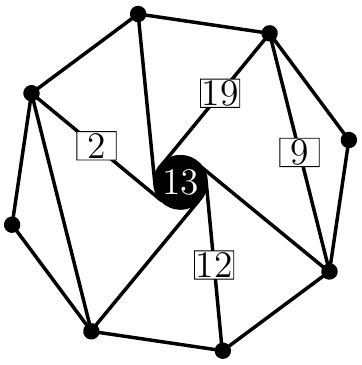} \includegraphics{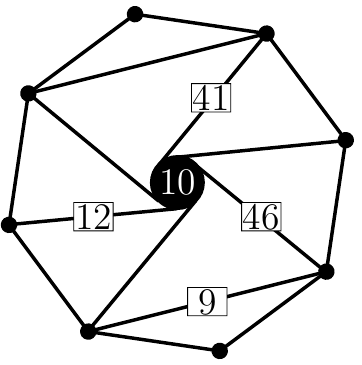}} \T\B\\\hline
	Type T7: & \resizebox{0.20\hsize}{!}{\includegraphics{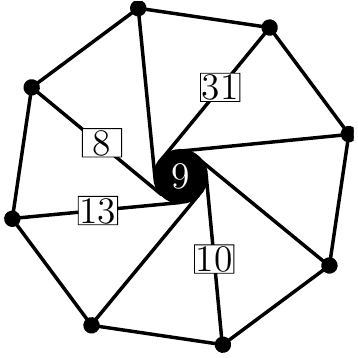} \includegraphics{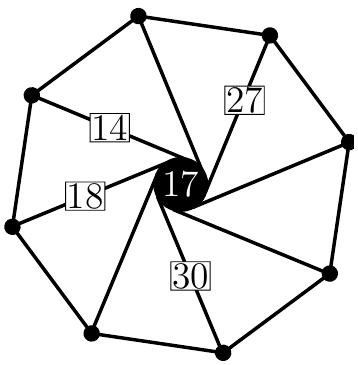}} &  \T\B\\
\end{tabular}
\caption{The combinatorial types of pseudotriangulations splitted into the combinatorial types of tropical planes.}
\label{tab:type}
\end{table}

Interestingly, although the equivalence relations are transversal, they intersect in a way that respects the reflection and swapping equivalence of the pseudotriangulations.
Using the table, we prove the next theorem giving a sufficient condition for two positive tropical planes to be combinatorially equivalent. 

\begin{theorem}
If two pseudotriangulations of~$\configD_4$ are related by a sequence of reflections of the octagon preserving the parity of the vertices (when labeled cyclically from 1 up to 8), and possibly followed by a global exchange of central chords, then their corresponding tropical planes in $\mathbb{TP}^5$ are combinatorially equivalent.
\end{theorem}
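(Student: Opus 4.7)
The plan is to reduce the statement to a finite inspection of the data assembled in Section~\ref{sec:computations} and the enumeration recorded in Table~\ref{tab:type}, rather than seeking a structural argument intrinsic to the tropical Grassmannian.

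First, I would pin down the group $G$ of octagon operations appearing in the hypothesis. Labeling the vertices cyclically $1,\ldots,8$, a reflection preserves the parity of vertex labels precisely when its axis passes through two antipodal vertices (axes through vertex $i$ and vertex $i+4$); the reflections through midpoints of opposite edges, by contrast, swap odd labels with even labels. The four parity-preserving reflections, together with their pairwise compositions, generate a dihedral subgroup $G_0$ of order $8$. Adjoining the central-chord swap $\sigma$ defined by $\diagD{p}{L}\leftrightarrow\diagD{p}{R}$ for every vertex $p$ (and the identity on all non-central chords) yields $G=\langle G_0,\sigma\rangle$. That $G$ acts on the set of $50$ pseudotriangulations of~$\configD_4$ is immediate from the definitions, since each generator preserves central symmetry and the crossing-free condition.

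Second, I would identify the $G$-orbits with the classes T1--T7 of Table~\ref{tab:type}. This step is essentially bookkeeping: the classification in Section~\ref{ssec:combi_type} used the strictly larger group generated by all dihedral symmetries together with $\tau$-rotation, and the underlying rotation in $\tau$ shifts labels by $1$ and so fails to preserve parity. Hence the $G$-orbits give a refinement of the seven combinatorial cluster classes of Figure~\ref{fig:combinatorial_pseudotriang} into the finer T-classes exhibited in Table~\ref{tab:type}. Finally, I would use the bijection $\Psi$ of Section~\ref{sec:connecting} to associate to each pseudotriangulation~$T$ its cone in $F_{3,6}$, and then Theorem~\ref{thm:computation} together with Table~\ref{tab:cone_partition} to read off the combinatorial type of the corresponding tropical plane. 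The theorem then reduces to the assertion that within each $G$-orbit $\mathrm{T}_i$, all pseudotriangulations map to cones of the \emph{same} tropical plane type, which is verified by direct inspection of Table~\ref{tab:type}.

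The main obstacle is organizational rather than conceptual. In particular, the two bipyramidal cones of $F_{3,6}$ correspond to pairs of clusters glued along a common facet (the type FFFGG cones, split into two simplices to recover the cluster complex), and one must keep careful track of which pseudotriangulation corresponds to which half of a bipyramid when matching $G$-orbits to tropical plane types. Once this bookkeeping is handled, the theorem follows by a finite comparison between Table~\ref{tab:type} and Table~\ref{tab:cone_partition}, with no further argument required. Note that this gives only a sufficient condition: distinct $G$-orbits can share a tropical plane type (for instance, T1 and T2 both meet type EEFG), so the converse is false.
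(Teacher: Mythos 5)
Your overall strategy---reduce the theorem to a finite check by combining the bijection~$\Psi$ of Section~\ref{sec:connecting}, Theorem~\ref{thm:computation}, and the data in Tables~\ref{tab:cone_partition} and~\ref{tab:type}---is precisely what the paper does, and your analysis of the group~$G$ (the dihedral group $G_0$ of order~$8$ generated by the four vertex-axis reflections, extended by the central-chord swap~$\sigma$ to a group of order~$16$) is correct.

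There is, however, a genuine error in your second step. You claim that the $G$-orbits ``give a refinement of the seven combinatorial cluster classes of Figure~\ref{fig:combinatorial_pseudotriang} into the finer T-classes exhibited in Table~\ref{tab:type},'' and then in the final step you treat T1--T7 as the $G$-orbits, saying the verification is that ``within each $G$-orbit $\mathrm{T}_i$, all pseudotriangulations map to cones of the same tropical plane type.'' This is self-contradictory and incompatible with the table: T1--T7 \emph{are} the seven cluster classes (orbits under the group of order~$32$ generated by all reflections together with $\tau$-rotation; the caption of Table~\ref{tab:type} calls them the combinatorial types of pseudotriangulations), and each T-class spans \emph{two} tropical-plane columns. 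For example, T1 contains eight pseudotriangulations of type EEFG and eight of type EFFG. If T1 were a $G$-orbit, your proposed inspection would fail and the theorem would be false. Your remark at the end---that T1 and T2 both meet type EEFG---should have been a warning, since T1 also meets EFFG and T2 also meets EFFG, so neither is confined to one column.

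The correct picture is that $|G|=16$ is exactly half the order of the group defining the cluster types, so the $G$-orbits are \emph{strictly finer} than T1--T7, and each must lie inside a single cell of Table~\ref{tab:type}. The table does not display the $G$-orbits explicitly (a cell may itself be a union of several $G$-orbits), so the verification requires actually computing the $G$-orbits on the~$50$ pseudotriangulations and checking that each one lands in a single column. The paper's subsequent remark---that the condition is necessary precisely for types EEEG and FFFGG, while the other four positive types are unions of the finer classes---describes exactly this structure. With the orbit identification corrected, the rest of your argument (the bookkeeping with the split bipyramids and the comparison against Table~\ref{tab:cone_partition}) goes through.
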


It turns out that this condition is necessary for types $EEEG$ and $FFFGG$. The four other positive types are obtained by taking unions of the classes generated by this finer equivalence on pseudotriangulations.

\begin{remark}
It is possible to translate this finer equivalence of pseudotriangulations in the language of subword complexes, see~\cite{ceballos_subword_2014}.
It would also be interesting to know if this sufficient condition extends to tropical planes in $\mathbb{TP}^6$ when looking at the cluster complex of type $E_6$ as a subword complex.
\end{remark}

\bibliographystyle{alpha}
\bibliography{article_arxiv}

\newcommand{\etalchar}[1]{$^{#1}$}
\def\cprime{$'$}
\begin{thebibliography}{{Pos}06}

\bibitem[CLS14]{ceballos_subword_2014}
Cesar Ceballos, Jean-Philippe Labb{\'e}, and Christian Stump.
\newblock Subword complexes, cluster complexes, and generalized
  multi-associahedra.
\newblock {\em J. Algebraic Combin.}, 39(1):17--51, 2014.

\bibitem[CP15a]{ceballos_denominator_2015}
Cesar Ceballos and Vincent Pilaud.
\newblock Denominator vectors and compatibility degrees in cluster algebras of
  finite type.
\newblock {\em Trans. Amer. Math. Soc.}, 367(2):1421--1439, 2015.

\bibitem[CP15b]{CP_pseudotriangulations}
Cesar {Ceballos} and Vincent {Pilaud}.
\newblock Cluster algebras of type {D}: pseudotriangulations approach.
\newblock {\em preprint, {\tt arXiv:1504:06377}}, April 2015.
\newblock 21~pp.

\bibitem[FST08]{FominShapiroThurston}
Sergey Fomin, Michael Shapiro, and Dylan Thurston.
\newblock Cluster algebras and triangulated surfaces. {I}. {C}luster complexes.
\newblock {\em Acta Math.}, 201(1):83--146, 2008.

\bibitem[FZ02]{FominZelevinsky-ClusterAlgebrasI}
Sergey Fomin and Andrei Zelevinsky.
\newblock Cluster algebras. {I}. {F}oundations.
\newblock {\em J. Amer. Math. Soc.}, 15(2):497--529 (electronic), 2002.

\bibitem[FZ03a]{FominZelevinsky-ClusterAlgebrasII}
Sergey Fomin and Andrei Zelevinsky.
\newblock Cluster algebras. {II}. {F}inite type classification.
\newblock {\em Invent. Math.}, 154(1):63--121, 2003.

\bibitem[FZ03b]{FominZelevinsky-YSystems}
Sergey Fomin and Andrei Zelevinsky.
\newblock {$Y$}-systems and generalized associahedra.
\newblock {\em Ann. of Math. (2)}, 158(3):977--1018, 2003.

\bibitem[FZ05]{FominZelevinsky-ClusterAlgebrasIII}
Sergey Fomin and Andrei Zelevinsky.
\newblock Cluster algebras. {III}. {U}pper bounds and double {B}ruhat cells.
\newblock {\em Duke Math. J.}, 126(1):1--52, 2005.

\bibitem[FZ07]{FominZelevinsky-ClusterAlgebrasIV}
Sergey Fomin and Andrei Zelevinsky.
\newblock Cluster algebras. {IV}. {C}oefficients.
\newblock {\em Compos. Math.}, 143(1):112--164, 2007.

\bibitem[GJ00]{polymake}
Ewgenij Gawrilow and Michael Joswig.
\newblock polymake: a framework for analyzing convex polytopes.
\newblock In Gil Kalai and G\"unter~M. Ziegler, editors, {\em Polytopes ---
  Combinatorics and Computation}, pages 43--74. Birkh\"auser, 2000.

\bibitem[HJJS09]{herrmann_how_2009}
Sven Herrmann, Anders Jensen, Michael Joswig, and Bernd Sturmfels.
\newblock How to draw tropical planes.
\newblock {\em Electron. J. Combin.}, 16(2):Research Paper 6, 26, 2009.

\bibitem[HJS12]{herrmann_dressians_2012}
Sven Herrmann, Michael Joswig, and David Speyer.
\newblock Dressians, tropical {G}rassmannians, and their rays.
\newblock {\em Forum Mathematicum}, 26(6):1853--1881, July 2012.

\bibitem[{Pos}06]{postnikov_total_2006}
Alexander {Postnikov}.
\newblock {Total positivity, Grassmannians, and networks}.
\newblock {\em preprint, {\tt arXiv:math/0609764}}, September 2006.
\newblock 79~pp.

\bibitem[S{\etalchar{+}}15]{sage}
William~A. Stein et~al.
\newblock {\em {S}age {M}athematics {S}oftware ({V}ersion 6.8)}.
\newblock The Sage Development Team, 2015.

\bibitem[SP02]{stanley_polytope_2002}
Richard~P. Stanley and Jim Pitman.
\newblock A polytope related to empirical distributions, plane trees, parking
  functions, and the associahedron.
\newblock {\em Discrete Comput. Geom.}, 27(4):603--634, 2002.

\bibitem[Spe08]{speyer_tropical_2008}
David~E. Speyer.
\newblock Tropical linear spaces.
\newblock {\em SIAM Journal on Discrete Mathematics}, 22(4):1527--1558, 2008.

\bibitem[Spe09]{speyer_matroid_2009}
David~E. Speyer.
\newblock A matroid invariant via the {$K$}-theory of the {Grassmannian}.
\newblock {\em Adv. Math.}, 221(3):882--913, 2009.

\bibitem[SS04]{speyer_tropical_2004}
David Speyer and Bernd Sturmfels.
\newblock The tropical {G}rassmannian.
\newblock {\em Adv. Geom.}, 4(3):389--411, 2004.

\bibitem[SW05]{speyer_tropical_2005}
David Speyer and Lauren Williams.
\newblock The tropical totally positive {G}rassmannian.
\newblock {\em J. Algebraic Combin.}, 22(2):189--210, 2005.

\end{thebibliography}

\end{document}